\numberwithin{equation}{section}
\newcommand{\calA}{\mathcal{A}}
\newcommand{\calG}{\mathcal{G}}
\newcommand{\calL}{\mathcal{L}}
\newcommand{\calT}{\mathcal{T}}
\newcommand{\mC}{\mathbb{C}}
\newcommand{\mF}{\mathbb{F}}
\newcommand{\mR}{\mathbb{R}}
\newcommand{\mS}{\mathbb{S}}
\newcommand{\mZ}{\mathbb{Z}}
\newcommand{\inv}{{\textrm{inv }}}
\newtheorem{theorem}{Theorem}[section]
\newtheorem{lemma}[theorem]{Lemma}
\newtheorem{proposition}[theorem]{Proposition}
\theoremstyle{definition}
\newtheorem{remark}[theorem]{Remark}
\theoremstyle{definition}
\newtheorem{definition}[theorem]{Definition}
\theoremstyle{definition}
\begin{document}

\keywords{$\nu$-metric, robust control, Banach algebras}

\subjclass{Primary 93B36; Secondary 93D15, 46J15}

\title[$\nu$-metric versus gap-metric]{The new $\nu$-metric induces
  the classical gap topology}

\author{Amol Sasane}
\address{Department of Mathematics, Royal Institute of Technology,
    Stockholm, Sweden.}
\email{sasane@math.kth.se}

\begin{abstract}
  Let $\calA_+$ denote the set of Laplace transforms of complex Borel
  measures $\mu$ on $[0,+\infty)$ such that $\mu$ does not have a
  singular non-atomic part. In \cite{BalSas}, an extension of the
  classical $\nu$-metric of Vinnicombe was given, which allowed one to
  address robust stabilization problems for unstable plants over
  $\calA_+$. In this article, we show that this new $\nu$-metric gives
  a topology on unstable plants which coincides with the classical gap
  topology for unstable plants over $\calA_+$ with a single input and
  a single output.
\end{abstract}

\maketitle

\section{Introduction}

We recall the general {\em stabilization problem} in control theory.
Suppose that $R$ is a commutative integral domain with identity
(thought of as the class of stable transfer functions) and let
$\mF(R)$ denote the field of fractions of $R$. Then the stabilization
problem is:

\medskip

\begin{center}
\parbox[r]{11cm}{Given $p\in \mF(R) $ (an unstable plant transfer function),

 find $c \in \mF(R)$ (a stabilizing controller
  transfer function),

 such that (the closed loop transfer function)
$$
H(p,c):= \left[\begin{array}{cc} p \\ 1 \end{array} \right]
(1-cp)^{-1} \left[\begin{array}{cc} -c & 1 \end{array} \right]
$$
belongs to $R^{2\times 2}$ (that is, it is stable).}
\end{center}

\medskip

\noindent In the {\em robust stabilization problem}, one goes a step
further.  One knows that the plant is just an approximation of
reality, and so one would really like the controller $c$ to not only
stabilize the {\em nominal} plant $p$, but also all sufficiently close
plants $p'$ to $p$.  The question of what one means by ``closeness''
of plants thus arises naturally. So one needs a function $d$ defined
on pairs of stabilizable plants such that
\begin{enumerate}
\item $d$ is a metric on the set of all stabilizable
plants,
\item $d$ is amenable to computation, and
\item stabilizability is a robust property of the plant with respect
  to $d$.
\end{enumerate}
Such a desirable metric, was introduced by Glenn Vinnicombe in
\cite{Vin} and is called the $\nu$-{\em metric}. In that paper,
essentially $R$ was taken to be the rational functions without poles
in the closed unit disk, and it was also shown that the topology
obtained was equivalent to the one obtained from the gap-metric
(introduced by Zames and El-Sakkary \cite{ZamElS},\cite{ElS}, which in
turn is equivalent to the graph metric of Vidyasagar \cite{Vid}).
 
The problem of what happens when $R$ is some other ring of stable
transfer functions of infinite-dimensional systems was left open in
\cite{Vin}. This problem of extending the $\nu$-metric from the
rational case to transfer function classes of infinite-dimensional
systems was addressed in \cite{BalSas}. There the starting point in
the approach was abstract. It was assumed that $R$ is any commutative
integral domain with identity which is a subset of a Banach algebra
$S$ satisfying certain assumptions, and then an ``abstract''
$\nu$-metric was defined in this setup, and it was shown in
\cite{BalSas} that it does define a metric on the class of all
stabilizable plants.  It was also shown there that stabilizability is
a robust property of the plant. In particular, this gave a metric on
unstable plants over $\calA_+$, where $\calA_+$ denotes the set of
Laplace transforms of complex Borel measures $\mu$ on $[0,+\infty)$
such that $\mu$ does not have a singular non-atomic part.

One can also define a gap-metric for unstable plants over $\calA_+$,
and so it is natural to ask if the $\nu$-metric and the gap-metric
induce the same topologies on unstable plants over $\calA_+$. In this
article we address this issue, and prove the following result.

\begin{theorem}
\label{main_theorem}
  On the set $\mS(\calA_+)$, the topologies induced by the
  $\nu$-metric $d_\nu$ and the gap-metric $d_g$ are identical.
\end{theorem}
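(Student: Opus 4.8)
\medskip
\noindent\emph{Proof strategy.}
The plan is to fix $p_2\in\mS(\calA_+)$ and establish the two-sided bound
\[
d_\nu(p_1,p_2)\ \le\ d_g(p_1,p_2)\ \le\ C\,d_\nu(p_1,p_2)\qquad\text{whenever }d_\nu(p_1,p_2)<r,
\]
with $r>0$ and $C$ depending only on $p_2$; this forces the two topologies on $\mS(\calA_+)$ to coincide. Everything is run through normalized coprime factorizations $p_i=N_iM_i^{-1}$ with $N_i,M_i\in\calA_+$, $M_i^{\sim}M_i+N_i^{\sim}N_i=1$, together with B\'ezout rows $Y_i\in\calA_+^{1\times2}$ satisfying $Y_iG_i=1$, where $G_i$ denotes the column $(M_i,N_i)$ and $\calA:=\calA_++\overline{\calA_+}$ is the larger algebra of symbols $\hat\mu$ with $\mu$ a Borel measure on $\mR$ without singular non-atomic part. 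The algebraic backbone is the complementary row $\widetilde G_2:=[\,-N_2\ \ M_2\,]$, which satisfies $\widetilde G_2G_2=0$ and makes $[\,G_2\ \ \widetilde G_2^{\sim}\,]$ unitary on $j\mR$; hence on $j\mR$
\[
G_1=a\,G_2+b\,\widetilde G_2^{\sim},\qquad a:=G_2^{\sim}G_1\in\calA,\quad b:=\widetilde G_2G_1\in\calA_+,\quad |a|^2+|b|^2=1 .
\]
Here $d_\nu(p_1,p_2)=\|b\|_\infty$ exactly when the index (winding-number) condition built into the definition of $d_\nu$ in \cite{BalSas} holds --- equivalently, when $a$ is invertible in $\calA$ with index $0$ --- and $d_\nu(p_1,p_2)=1$ otherwise; while $d_g(p_1,p_2)=\max\{\vec\delta(p_1,p_2),\vec\delta(p_2,p_1)\}$ with $\vec\delta(p_1,p_2)=\|\Pi_{\calG_2^{\perp}}\Pi_{\calG_1}\|$ and $\calG_i=G_iH^2\subseteq H^2\oplus H^2$ the graph of $p_i$.

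For $d_\nu\le d_g$: the case $d_g(p_1,p_2)\ge1$ is trivial, so suppose $d_g(p_1,p_2)<1$. Then both directed gaps are $<1$; identifying $\calG_i$ with $H^2$ via $M_{G_i}$ turns $\Pi_{\calG_2}|_{\calG_1}$ into the Toeplitz operator $T_a$ and $\Pi_{\calG_1}|_{\calG_2}$ into $T_a^{*}$, and both being bounded below forces $T_a$ to be invertible, hence (by the invertibility criterion for Toeplitz operators with symbol in $\calA$) $a$ is invertible in $\calA$ with index $0$: the winding condition holds and $d_\nu(p_1,p_2)=\|b\|_\infty=\|T_{\widetilde G_2}M_{G_1}\|$. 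Since $T_{\widetilde G_2}$ has norm $1$ and annihilates $\calG_2$, we may insert $\Pi_{\calG_2^{\perp}}$ and conclude $d_\nu(p_1,p_2)\le\|\Pi_{\calG_2^{\perp}}M_{G_1}\|=\vec\delta(p_1,p_2)\le d_g(p_1,p_2)$.

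For the reverse bound near $p_2$, let $d_\nu(p_1,p_2)=\varepsilon<1$, so $\varepsilon=\|b\|_\infty$ and $a$ is invertible in $\calA$ with index $0$. From $\vec\delta(p_1,p_2)\le\inf_{Q\in H^\infty}\|G_1-G_2Q\|_\infty$ and the pointwise identity $|G_1-G_2Q|^2=|a-Q|^2+|b|^2$ on $j\mR$ one gets $\vec\delta(p_1,p_2)^2\le\varepsilon^2+\inf_Q\|a-Q\|_\infty^2$. The decisive move is to use the \emph{stable} competitor $Q:=Y_2G_1\in\calA_+$ in place of the optimal but generally unstable $G_2^{\sim}G_1$: since $Y_2G_2=1$,
\[
a-Q=Y_2(aG_2-G_1)=-\,b\,(Y_2\widetilde G_2^{\sim}),\qquad\text{so}\qquad\|a-Q\|_\infty\le C_2\varepsilon,\quad C_2:=\|Y_2\widetilde G_2^{\sim}\|_\infty ,
\]
with $C_2$ depending only on $p_2$; hence $\vec\delta(p_1,p_2)\le\sqrt{1+C_2^{2}}\,\varepsilon$. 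Choosing $r$ with $r\sqrt{1+C_2^{2}}<1$, for $d_\nu(p_1,p_2)<r$ we have $\vec\delta(p_1,p_2)<1$, so $T_a$ is bounded below; being in addition Fredholm of index $-\ind a=0$, it is invertible, hence so is $T_a^{*}$, hence $\vec\delta(p_2,p_1)<1$. With the gap of $\calG_1,\calG_2$ now below $1$ the two directed gaps coincide, so $d_g(p_1,p_2)=\vec\delta(p_1,p_2)\le\sqrt{1+C_2^{2}}\;d_\nu(p_1,p_2)$, which gives the claim with $C=\sqrt{1+C_2^{2}}$.

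I expect this last step to be the real obstacle. A priori the $L^\infty$-distance from $a=G_2^{\sim}G_1$ to $\calA_+$ need not be small even when $\|b\|_\infty$ is (unimodular index-zero symbols can lie far from $\calA_+$), so one cannot approximate $a$ optimally; the trick of using the stable approximant $Y_2G_1$ and letting the B\'ezout identity absorb the error into $b$ times a fixed function is what makes the estimate work with a constant depending only on $p_2$. The residual asymmetry between $\vec\delta(p_1,p_2)$ and $\vec\delta(p_2,p_1)$ --- which would otherwise require controlling a B\'ezout row for the \emph{varying} plant $p_1$ --- is removed by the index hypothesis, via invertibility of $T_{G_2^{\sim}G_1}$ and the equality of the two directed gaps when the gap is $<1$. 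The auxiliary facts used (normalized coprime and spectral factorizations over $\calA_+$, invertibility of $T_a$ iff $a$ is invertible in $\calA$ with index $0$, and symmetry of the gap below $1$) are standard or are adaptations of results in \cite{BalSas}.
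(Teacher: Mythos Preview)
Your argument is correct, and for the inequality $d_\nu\le d_g$ it is essentially the same as the paper's: both rely on the Toeplitz/Fredholm machinery (the paper's Lemmas~\ref{lemma_fred_1}, \ref{lemma_fred_2} and Proposition~\ref{prop_douglas}) to extract the index condition $W(G_2^*G_1)=(0,0)$ from $d_g<1$, after which $\|\widetilde G_2G_1\|_\infty\le\vec\delta(p_1,p_2)$ is immediate. The paper packages this via the infimum formulas of Propositions~\ref{prop_alt_exp_nu_metric} and \ref{prop_alt_exp_gap_metric}, but the content is the same.

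Where you genuinely diverge is in bounding $d_g$ above by $d_\nu$. The paper chooses a \emph{normalized} stabilizing controller $c_0$ for $p_1$, sets $q_0=(\widetilde K_0G_1)^{-1}\widetilde K_0G_2$, and uses the identity $I-G_1(\widetilde K_0G_1)^{-1}\widetilde K_0=K_0(\widetilde G_1K_0)^{-1}\widetilde G_1$ together with $\|(\widetilde G_1K_0)^{-1}\|_\infty=1/\mu_{p_1,c_0}$ (from \cite{BalSas}) to obtain the global two-sided estimate $d_g(p_1,p_2)\,\mu_{\mathrm{opt}}(p_1)\le d_\nu(p_1,p_2)\le d_g(p_1,p_2)$. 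Your competitor $Q=Y_2G_1$ with an arbitrary B\'ezout row $Y_2$ for $p_2$, and the observation $a-Q=-b\,Y_2\widetilde G_2^{\sim}$, is more elementary: it avoids the robust-stability-margin result (Theorem~\ref{theorem_stab_margin_nu_metric}) and the normalization of the controller altogether, at the price of a constant $\sqrt{1+\|Y_2\widetilde G_2^{\sim}\|_\infty^2}$ that depends on $p_2$ and has no direct control-theoretic interpretation. Both routes suffice for the topological statement; the paper's yields a sharper, globally valid inequality with a meaningful constant, while yours is shorter and needs less imported machinery.
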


The notation $\mS(\calA_+)$ will be explained carefully in
Section~\ref{section_abstract_nu_metric}, but roughly speaking, it is
to be thought of as the class of unstable plants over $\calA_+$ with a
single input and a single output. Owing to a technical difficulty, we
restrict ourselves to single input and single output systems. We end
this article with an open problem, namely the validity of our main
result for systems with multiple inputs and multiple outputs, while
pointing out the precise nature of the technical difficulty.

The paper is organized as follows:
\begin{enumerate}
\item In Section~\ref{section_abstract_nu_metric}, we recall from
  \cite{BalSas} the $\nu$-metric in the context of unstable plants
  over $\calA_+$, and also derive an alternative expression for it in
  Proposition~\ref{prop_alt_exp_nu_metric}, reminiscent of Georgiou's
  formula for the gap-metric from \cite{Geo}.
\item In Section~\ref{section_gap_metric}, we give the definition of
  the gap-metric in the context of unstable plants over $\calA_+$. An
  alternative expression for the gap-metric is given in
  Proposition~\ref{prop_alt_exp_gap_metric}, which will be used in
  order to show the equivalence of $d_\nu$ and $d_g$.
\item Finally, in Section~\ref{section_equivalence}, we will prove our
  main result (Theorem~\ref{main_theorem}). At the end of this
  section, we also highlight the main obstacle towards extending
  Theorem~\ref{main_theorem} to systems with multiple inputs and outputs.
\end{enumerate}

\section{Notation index}

For the convenience of the reader, we have included a table here which
shows the page numbers of the places where the corresponding notation
is first defined.

\medskip

\begin{center}
\begin{tabular}{|c||l|} \hline
  Notation              & Page number \\ \hline \hline
  $\widehat{\;\cdot\;}$ & Laplace transform (page \pageref{pageref_Laplace}) or \\
                        & Fourier transform (page \pageref{pageref_Fourier}) \\ \hline
  ${\cdot}^*$           & pages \pageref{pageref_star_first}, \pageref{pageref_star_a}, 
                          \pageref{pageref_star_again}  \\ \hline
  $\calA$               & page \pageref{pageref_calA} \\ \hline
  $\calA_+$             & page \pageref{pageref_calA_+} \\ \hline 
  $AP$                  & almost periodic functions (page \pageref{pageref_AP}) \\ \hline 
  $C_0$                 & functions vanishing at $\pm\infty$  (page \pageref{pageref_C_0})\\ \hline 
  $\mC_+$               & right half of the complex plane (page \pageref{pageref_mC+})\\ \hline 
  $\;\;\vec{\delta}\phantom{\widehat{f^f}}$        & directed gap  (page \pageref{pageref_vec_delta})\\ \hline 
  $\;\;\;\;d_g\phantom{{g_{p}}_p}$                 & gap-metric    (page \pageref{pageref_d_g})                   \\ \hline
  $d_\nu$               & $\nu$-metric (page \pageref{eq_nu_metric}) \\ \hline
  $\mF(\calA_+)$        & field of fractions over $\calA_+$ (page \pageref{subsec1}) \\ \hline
  $\calG$               & graph of a system (page \pageref{pageref_calG}) \\ \hline
  $\;\;G, \widetilde{G}, K, \widetilde{K}\phantom{\widehat{f^f}}$ 
                        & matrices built from coprime factorizations (page \pageref{subsec5})\\ \hline
  $\inv \cdot$          & invertible elements of a ring (page \pageref{pageref_inv})\\ \hline
  $P_\calG$              & projection onto $\calG$ (page \pageref{pageref_P_calG}) \\ \hline 
  $P_{\calG_1}|_{\calG_2}$ & restriction of $P_{\calG_1}$ to $\calG_2$ (page \pageref{pageref_P_calG_1_calG_2}) \\ \hline 
  $\mS(\calA_+)$        & plants with a normalized coprime factorization (page \pageref{subsec6})\\ \hline 
  $T_X$                 & Toeplitz operator (page \pageref{pageref_T_X}) \\ \hline 
  ${\tt w}$             & winding number for continuous closed\\
                        & curves avoiding $0$ (page \pageref{pageref_wind_no})\\ \hline
  $w$                   & average winding number for invertible\\
                        & $AP$ functions (page \pageref{pageref_av_wind_no})\\ \hline
  $W$                   & index for invertible elements in $\calA$ (page \pageref{pageref_index_W}) \\ \hline
\end{tabular}

\end{center}

\section{The $\nu$-metric}
\label{section_abstract_nu_metric}

In this section we will recall the new $\nu$-metric for unstable
plants over the ring $\calA_+$ (defined below), which was listed as a
particular example in \cite[Subsection~5.3]{BalSas} of the abstract
$\nu$-metric introduced in that paper. At the end of this section, we
will also give an alternate expression for the $\nu$-metric, which
will be used later in order to show the equivalence of the
$\nu$-metric topology with the classical gap topology.

If $R$ is a commutative integral domain with identity $1$, we use the
symbol $\inv R$\label{pageref_inv} for the set of invertible elements of $R$.

We denote by $\calA_+$ the set of Laplace transforms of complex Borel
measures $\mu$ on $[0,+\infty)$ such that $\mu$ does not have a
singular non-atomic part. A more explicit description of the elements
of $\calA_+$ can be given as follows. Let \label{pageref_mC+}
$$
\mC_{+}:=\{s\in \mC: \textrm{Re}(s)\geq 0\}.
$$
Then \label{pageref_calA_+}
$$
\calA_+ = \bigg\{  s (\in \mC_{+}) \mapsto \widehat{f_a}(s)
  +\displaystyle \sum_{k\geq 0} f_k e^{- s t_k}  \Big| 
\begin{array}{ll}
 f_a \in L^{1}(0,\infty), \;(f_k)_{k\geq 0} \in \ell^{1},\\
 0=t_0 <t_1 ,t_2 , t_3, \dots
 \end{array} 
\bigg\},
$$
and equipped with pointwise operations and the norm:
$$
\|F\|=\|f_a\|_{\scriptscriptstyle L^{1}} +
\|(f_k)_{k\geq 0}\|_{\scriptscriptstyle \ell^1},
\;\; F(s)=\widehat{f_a}(s) +\displaystyle\sum_{k\geq 0} f_k e^{-st_k}\;\;(s\in \mC_+),
$$
$\calA_+$ is a Banach algebra. Here $\widehat{f_a}$ denotes the {\em
  Laplace transform} of $f_a$:\label{pageref_Laplace}
$$
\widehat{f_a}(s)=\displaystyle \int_0^\infty e^{-st} f_a(t)
dt, \quad s \in \mC_+.
$$
Similarly, define $\calA$ as follows:\label{pageref_calA}
$$
\!\!\!\!\!\!\calA\!=\!\bigg\{ iy (\in i\mR) \mapsto \widehat{f_a}(iy)
  +\!\!\!\displaystyle\sum_{k\in \mZ} f_k e^{- iy t_k} \Big|
\begin{array}{ll}
f_a \in L^{1}(\mR), \;(f_k)_{k\in \mZ } \in \ell^{1},\\
\dots, t_{-2}, t_{-1}<\!0\!=\!t_0\! <t_1 ,t_2 ,  \dots
\end{array} \!\!\!\bigg\}.
$$
Then, equipped with pointwise operations and the norm:
$$
\|F\|=\|f_a\|_{\scriptscriptstyle L^{1}} + \|(f_k)_{k\in
  \mZ}\|_{\scriptscriptstyle \ell^1}, \;\; F(iy):=\widehat{f_a}(iy)
+\displaystyle\sum_{k\in \mZ} f_k e^{-iy t_k}\;\;(y\in \mR),
$$
$\calA$ is a unital commutative complex semisimple Banach algebra.
Here $\widehat{f_a}$ is the {\em Fourier transform} of $f_a$, \label{pageref_Fourier}
$$
\widehat{f_a}(iy)= \displaystyle \int_{-\infty}^\infty e^{-iyt} f_a(t)
dt \quad (y \in \mR).
$$
One can also define an involution $\cdot^*$ on $\calA$, given by \label{pageref_star_first}
$$
F^*(iy)=\overline{F(iy)}, \quad y \in \mR,
$$
for $F\in \calA$. Clearly, $\calA_+ \subset \calA$.

The algebra $AP$ of complex valued (uniformly) {\em almost periodic
  functions}\label{pageref_AP} is the smallest closed subalgebra of $L^\infty(\mR)$ that
contains all the functions $e_\lambda := e^{i \lambda y}$. Here the
parameter $\lambda$ belongs to $\mR$.  For any $f\in AP$, its {\em
  Bohr-Fourier series} is defined by the formal sum
\begin{equation}
\label{eq_BFs}
\sum_{\lambda} f_\lambda e^{i  \lambda y} , \quad y\in \mR,
\end{equation}
where
$$
f_\lambda:= \lim_{N\rightarrow \infty} \frac{1}{2N}
\int_{[-N,N]}   e^{-i \lambda y} f(y)dy, \quad
\lambda \in \mR,
$$
and the sum in \eqref{eq_BFs} is taken over the set $
\sigma(f):=\{\lambda \in \mR\;|\; f_\lambda \neq 0\}$, called the {\em
  Bohr-Fourier spectrum} of $f$. The Bohr-Fourier spectrum of every
$f\in AP$ is at most a countable set. For each $f\in \inv AP$, we can
define the {\em average winding number}\label{pageref_av_wind_no}
$w(f)\in \mR$ of $f$ as follows \cite[Theorem 1, p. 167]{JesTor}:
$$
w(f)= \lim_{T \rightarrow \infty} \frac{1}{2T}
\Big( \arg (f(T))-\arg(f(-T))\Big).
$$
We set 
$$
F_{AP}(iy)=\displaystyle\sum_{k\in \mZ} f_k e^{-iy
  t_k}\;\;(y\in \mR) \quad \textrm{for} \quad F=\widehat{f_a}
+\displaystyle\sum_{k\in \mZ} f_k e^{-i\cdot t_k}\in \calA.
$$
If $F =\widehat{f_a}+F_{AP} \in \inv \calA$, then it can be shown that
(\cite[Subsection~5.3]{BalSas}) $F_{AP}(i \cdot) \in \inv AP$.
Moreover, $F=\widehat{f_a}+F_{AP} \in \calA$ is invertible if and only
if for all $y\in \mR$, $F(iy) \neq 0$ and $\displaystyle \inf_{y\in
  \mR} |F_{AP}(iy)| >0$.

Since $\widehat{L^1(\mR)}$ is an ideal in $\calA$, it follows that
$F_{AP}^{-1}\widehat{f_a}$ is the Fourier transform of a function in
$L^{1}(\mR)$, and so the map
$$
y \mapsto
1+(F_{AP}(iy))^{-1}\widehat{f_a}(iy)=\frac{F(iy)}{F_{AP}(iy)}
$$
has a well-defined winding number ${\tt w}$ \label{pageref_wind_no}
around $0$. Geometrically, ${\tt w}(f)$ is the number of times the
curve $t \mapsto f(t)$ winds around the origin in a counterclockwise
direction.
 
Define the {\em index} $W: \inv \calA \rightarrow \mR\times \mZ$ by
\begin{equation}   \label{pageref_index_W}
 W (F)= \Big(w(F_{AP}), {\tt
  w}(1+F_{AP}^{-1} \widehat{f_a}) \Big),
\end{equation}
 where
$F=\widehat{f_a}+F_{AP} \in \inv \calA$, and
$$
\begin{array}{ll}
w(F_{AP})
:=
\displaystyle \lim_{R \rightarrow \infty} \frac{1}{2R}
\Big( \arg \big(F_{AP}(iR)\big)-\arg\big(F_{AP}(-iR)\big)\Big),
\\
{\tt w}(1+F_{AP}^{-1} \widehat{f_a})
:=
\displaystyle \frac{1}{2\pi}\Big( \arg \big(1+(F_{AP}(iy)\big)^{-1} \widehat{f_a}(iy)  )
\Big|_{y=-\infty}^{y=+\infty}\Big).\phantom{ \frac{1^f}{2\pi}}
\end{array}
$$
The map $W: \inv \calA \rightarrow \mR\times \mZ$ satisfies:
\begin{itemize}
\item[(I1)] $W(ab)= W (a) +W(b)$ ($a,b \in \inv \calA$).
\item[(I2)] $W(a^*)=-W(a)$ ($a\in \inv \calA$).
\item[(I3)] $W$ is locally constant, that is, $W$ continuous
  when $\mR\times \mZ$ is equipped with the discrete topology.
\item[(I4)] $x\in \calA_+ \cap (\inv \calA)$ is invertible as an
  element of $\calA_+$ if and only if $W(x)=(0,0)$.
\end{itemize}
A consequence of (I3) is the following ``homotopic invariance of the
index'' (see \cite[Proposition~2.1]{BalSas}): if $H:[0,1] \rightarrow
\inv \calA$ is a continuous map, then $W (H(0))=W(H(1))$.

We recall the following standard notation and definitions from the factorization
approach to control theory.

\subsection{The notation $\mF(\calA_+)$:}
\label{subsec1} $\mF(\calA_+)$ denotes the field of  fractions of $\calA_+$.

\subsection{The notation $F^*$:}
\label{pageref_star_a} If $F\in \calA_+^{p\times m}$, then
$F^*\in \calA^{m\times p}$ is the matrix with the entry in the $i$th
row and $j$th column given by $F_{ji}^*$, for all $1\leq i\leq p$, and
all $ 1\leq j \leq m$.

\subsection{Coprime/normalized coprime factorization:} Given $p \in
\mF(R)$, a factorization $p=nd^{-1}$, where $n,d \in R$, is called a
{\em coprime factorization of} $P$ if there exist $x, y \in R$ such
that $ x n + y d=1$.  If moreover there holds that $ n^{*} n +d^{*} d
=1$, then the coprime factorization is referred to as a {\em
  normalized} coprime factorization of $p$.

\subsection{The notation $G, \widetilde{G}, K,\widetilde{K}$:}  
\label{subsec5}
Given $p \in \mF(\calA_+)$ with a normalized coprime factorization
$p=nd^{-1}$, we introduce the following matrices with entries from
$\calA_+$:
$$
G=\left[ \begin{array}{cc} n \\ d \end{array} \right] \quad
\textrm{and} \quad \widetilde{G}=\left[ \begin{array}{cc} -d & n
    \end{array} \right] .
$$
Similarly, given $c \in \mF(\calA_+)$ with normalized coprime
factorization $c=xy^{-1}$, we introduce the following matrices with
entries from $\calA_+$:
$$
K=\left[ \begin{array}{cc} y \\ x \end{array} \right] \quad
\textrm{and} \quad \widetilde{K}=\left[ \begin{array}{cc} -x &
    y\end{array} \right] .
$$

\subsection{The notation $\mS(\calA_+)$:}
\label{subsec6} 
We denote by $\mS(\calA_+)$ the set of all elements $p\in
\mF(\calA_+)$ that possess a normalized coprime factorization. 

\begin{remark}\label{remark}
$\;$
\begin{enumerate}
\item It can be shown (see for example \cite[Chapter 8]{Vid}) that if $ p\in
\mS(\calA_+)$, then $p$ is a {\em stabilizable plant over} $\calA_+$,
that is, there exists a $c\in \mF(\calA_+)$ such that $H(p,c)\in
R^{2\times 2}$.
\item \cite[Subsection~3.5]{BruSas} shows that every
stabilizable plant $p\in \mF(\calA_+)$ admits a coprime factorization
over $\calA_+$. 
\item 
\label{Kalle_remark}
It follows from the proof of \cite[Lemma~6.5.6.(e)]{Mik} and
\cite[Theorem~5.2.8]{Mik} that whenever $p\in \mF(\calA_+)$ has a
coprime factorization over $\calA_+$, it also has a {\em normalized}
coprime factorization over $\calA_+$.
\end{enumerate}
\end{remark}
Putting these remarks together, we see that $\mS(\calA_+)$ is exactly
the set of all plants in $\mF(\calA_+)$ that are stabilizable over
$\calA_+$.

\begin{definition}[$\nu$-metric $d_\nu$ on $\mS(\calA_+)$]
\label{def_nu_metric}
For $p_1, p_2 \in \mS(\calA_+)$, with the normalized coprime
factorizations $p_1= n_{1} d_{1}^{-1}$ and $p_2= n_{2} d_{2}^{-1}$, 
we define
\begin{equation}
\label{eq_nu_metric}
d_{\nu} (p_1,p_2 ):=\left\{
\begin{array}{ll}
  \|\widetilde{G}_{2} G_{1}\|_\infty &
  \textrm{if } G_1^* G_2 \in \inv \calA \textrm{ and }
  W (G_1^* G_2)=(0,0), \\
  1 & \textrm{otherwise}. \end{array}
\right.
\end{equation}
where the notation is as in Subsections~\ref{subsec1}-\ref{subsec6}.
\end{definition}

We have the following; see \cite{BalSas}:

\begin{theorem}
\label{thm_d_nu_is_a_metric}
$d_\nu$ given by \eqref{eq_nu_metric} is a metric on $\mS(\calA_+)$.
\end{theorem}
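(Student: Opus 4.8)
The plan is to verify the metric axioms for $d_{\nu}$ one at a time, the only substantial point being the triangle inequality. Throughout one uses the following algebraic consequences of a normalized coprime factorization $p=nd^{-1}$ together with the commutativity of $\calA_{+}$: one checks directly that $G^{*}G=n^{*}n+d^{*}d=1$, $\widetilde{G}\widetilde{G}^{*}=nn^{*}+dd^{*}=1$, $\widetilde{G}G=0$, $G^{*}\widetilde{G}^{*}=0$, and hence also the companion identity
\begin{equation*}
GG^{*}+\widetilde{G}^{*}\widetilde{G}=I\qquad\text{in }\calA^{2\times2}.
\end{equation*}
Evaluated on $i\mR$ these say that $G(iy)$ and $\widetilde{G}(iy)^{*}$ are unit vectors in $\mC^{2}$ spanning orthogonal lines, and that the norm-one functional $\widetilde{G}(iy)$ has kernel $\mC G(iy)$; in particular $|\widetilde{G}(iy)\,v|=\|P_{(\mC G(iy))^{\perp}}v\|$ for every $v\in\mC^{2}$, where $P$ denotes orthogonal projection.

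\emph{Well-definedness and the elementary axioms.} Any two normalized coprime factorizations of a fixed $p$ differ by a unit, $n'=nu$, $d'=du$ with $u\in\inv\calA_{+}$, and the normalization forces $u^{*}u=1$, i.e.\ $|u(iy)|=1$ on $i\mR$; by (I4), $W(u)=(0,0)$. Hence $\widetilde{G}_{2}'G_{1}'=u_{1}u_{2}\,\widetilde{G}_{2}G_{1}$ has the same supremum norm as $\widetilde{G}_{2}G_{1}$, while $(G_{1}')^{*}G_{2}'=u_{1}^{*}u_{2}\,G_{1}^{*}G_{2}$ lies in $\inv\calA$ exactly when $G_{1}^{*}G_{2}$ does, with the same index by (I1)--(I2); so $d_{\nu}$ is independent of the chosen factorizations. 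Pointwise $|\widetilde{G}_{2}(iy)G_{1}(iy)|\le\|\widetilde{G}_{2}(iy)\|\,\|G_{1}(iy)\|=1$, so $0\le d_{\nu}\le1$. Using one and the same factorization for both arguments gives $G^{*}G=1$, $W(1)=(0,0)$ and $\widetilde{G}G=0$, hence $d_{\nu}(p,p)=0$; conversely $d_{\nu}(p_{1},p_{2})=0$ forces $\widetilde{G}_{2}G_{1}=n_{2}d_{1}-n_{1}d_{2}=0$ in $\calA_{+}$, so $n_{1}d_{1}^{-1}=n_{2}d_{2}^{-1}$, i.e.\ $p_{1}=p_{2}$. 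Symmetry is immediate from $G_{2}^{*}G_{1}=(G_{1}^{*}G_{2})^{*}$ together with (I2) (which makes the side condition symmetric) and from $\widetilde{G}_{1}G_{2}=-\widetilde{G}_{2}G_{1}$ (which makes the value symmetric).

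\emph{Triangle inequality.} Let $p_{1},p_{2},p_{3}\in\mS(\calA_{+})$. If $d_{\nu}(p_{1},p_{2})+d_{\nu}(p_{2},p_{3})\ge1$ there is nothing to prove, since $d_{\nu}\le1$. Otherwise both summands are $<1$, so the side conditions hold for $(p_{1},p_{2})$ and $(p_{2},p_{3})$; set $r_{1}=\|\widetilde{G}_{2}G_{1}\|_{\infty}$ and $r_{2}=\|\widetilde{G}_{3}G_{2}\|_{\infty}=\|\widetilde{G}_{2}G_{3}\|_{\infty}$, so $r_{1}+r_{2}<1$. For the value: $|\widetilde{G}_{j}(iy)G_{i}(iy)|$ equals the gap (aperture) between the lines $\mC G_{i}(iy)$ and $\mC G_{j}(iy)$ in $\mC^{2}$, and since this gap is a metric on one-dimensional subspaces, $|\widetilde{G}_{3}(iy)G_{1}(iy)|\le|\widetilde{G}_{2}(iy)G_{1}(iy)|+|\widetilde{G}_{3}(iy)G_{2}(iy)|$ for each $y$; taking suprema gives $\|\widetilde{G}_{3}G_{1}\|_{\infty}\le r_{1}+r_{2}$. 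For the side condition, multiply $G_{2}G_{2}^{*}+\widetilde{G}_{2}^{*}\widetilde{G}_{2}=I$ on the left by $G_{1}^{*}$ and on the right by $G_{3}$:
\begin{equation*}
G_{1}^{*}G_{3}=(G_{1}^{*}G_{2})(G_{2}^{*}G_{3})+(\widetilde{G}_{2}G_{1})^{*}(\widetilde{G}_{2}G_{3})=:X+Y,
\end{equation*}
where $X\in\inv\calA$ and, by (I1), $W(X)=W(G_{1}^{*}G_{2})+W(G_{2}^{*}G_{3})=(0,0)$. On $i\mR$ the Pythagorean relations $|G_{1}^{*}G_{2}|^{2}+|\widetilde{G}_{2}G_{1}|^{2}=1=|G_{2}^{*}G_{3}|^{2}+|\widetilde{G}_{2}G_{3}|^{2}$ (both from $\|G_{i}(iy)\|=1$), with $|\widetilde{G}_{2}G_{1}|\le r_{1}$ and $|\widetilde{G}_{2}G_{3}|\le r_{2}$, give
\begin{equation*}
|X^{-1}(iy)Y(iy)|\le\frac{r_{1}r_{2}}{\sqrt{(1-r_{1}^{2})(1-r_{2}^{2})}}<1,
\end{equation*}
the last step because $r_{1}^{2}+r_{2}^{2}\le(r_{1}+r_{2})^{2}<1$. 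Thus $G_{1}^{*}G_{3}=X(1+X^{-1}Y)$ with $1+X^{-1}Y\in\inv\calA$, and the homotopy $t\mapsto1+tX^{-1}Y$, $t\in[0,1]$, stays in $\inv\calA$: it is zero-free on $i\mR$ because $|tX^{-1}Y(iy)|<1$, and its almost-periodic part stays bounded away from $0$ because $F\mapsto F_{AP}$ is a contraction for the supremum norm on $i\mR$ (the Fourier transforms of $L^{1}(\mR)$-functions vanish at $\pm\infty$). Homotopic invariance of $W$ then yields $W(1+X^{-1}Y)=W(1)=(0,0)$, so $W(G_{1}^{*}G_{3})=(0,0)$ and the side condition holds for $(p_{1},p_{3})$; therefore $d_{\nu}(p_{1},p_{3})=\|\widetilde{G}_{3}G_{1}\|_{\infty}\le r_{1}+r_{2}$.

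\emph{Expected main obstacle.} Everything except the triangle inequality is routine bookkeeping with normalized coprime factorizations. The real work is the triangle inequality, and within it the inheritance of the side condition $G_{i}^{*}G_{j}\in\inv\calA$ with trivial index along a chain: one must simultaneously control the pointwise modulus on $i\mR$ and the almost-periodic / winding-number data that govern both invertibility in $\calA$ and the value of $W$, the delicate point being that the homotopy connecting $1+X^{-1}Y$ to $1$ never leaves $\inv\calA$ — which is where the sup-norm contractivity of the almost-periodic projection is needed. (The identity $GG^{*}+\widetilde{G}^{*}\widetilde{G}=I$ and the relation $\widetilde{G}_{1}G_{2}=-\widetilde{G}_{2}G_{1}$ both use commutativity of $\calA_{+}$, which is unproblematic here since $\mF(\calA_{+})$ is a field.)
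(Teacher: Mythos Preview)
The paper does not actually prove this theorem; it is stated with the attribution ``see \cite{BalSas}'', where the abstract $\nu$-metric was introduced and the metric axioms were verified in the general setting. So there is no in-paper argument to compare against.

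Your proof is correct and follows what is presumably the natural (and original) route: well-definedness and the first two axioms are bookkeeping with normalized coprime factorizations and the properties (I1)--(I4), while the triangle inequality splits into (i) the pointwise triangle inequality for the gap between lines in $\mC^{2}$, giving $\|\widetilde G_{3}G_{1}\|_{\infty}\le r_{1}+r_{2}$, and (ii) inheritance of the side condition via the decomposition $G_{1}^{*}G_{3}=(G_{1}^{*}G_{2})(G_{2}^{*}G_{3})+(\widetilde G_{2}G_{1})^{*}(\widetilde G_{2}G_{3})$ together with a homotopy to compute the index. The one step that could use one more sentence is the claim that $F\mapsto F_{AP}$ is a sup-norm contraction on $\calA$: your parenthetical (Riemann--Lebesgue, $\widehat{L^{1}(\mR)}\subset C_{0}$) is the right ingredient, but one should also use that an almost periodic function realizes its supremum on any half-line, so that for every $\epsilon>0$ one can choose $R$ with $|\widehat{f_{a}}(iy)|<\epsilon$ for $|y|>R$ and conclude
\[
\|F_{AP}\|_{\infty}=\sup_{|y|>R}|F_{AP}(iy)|\le \sup_{|y|>R}|F(iy)|+\epsilon\le \|F\|_{\infty}+\epsilon.
\]
With this, each $1+tX^{-1}Y$ has nonvanishing value on $i\mR$ and almost-periodic part bounded away from $0$, hence lies in $\inv\calA$; the map $t\mapsto 1+tX^{-1}Y$ is affine and therefore $\calA$-norm continuous, so the homotopic invariance of $W$ applies and the index computation goes through.
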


Moreover, stabilizability is a robust property of the plant in this new
$\nu$-metric. In order to see this, we first introduce the notion of
stability margin for a pair comprising a plant and its controller.

\begin{definition}
Given $p,c \in \mF(\calA_+)$, the {\em stability margin} of the pair 
$(p,c)$ is defined by
$$
\mu_{p,c}=\left\{ \begin{array}{ll}
\|H(p,c)\|_{\infty}^{-1} &\textrm{if }p \textrm{ is stabilized by }c,\\
0 & \textrm{otherwise.}
\end{array}\right.
$$
\end{definition}

The number $\mu_{p,c}$ can be interpreted as a measure of the
performance of the closed loop system comprising $p$ and $c$: larger
values of $\mu_{p,c}$ correspond to better performance, with
$\mu_{p,c}>0$ if $c$ stabilizes $p$. 

The following was proved in \cite{BalSas}:

\begin{theorem}
\label{theorem_stab_margin_nu_metric}
If $p,p'\in \mS(\calA_+)$ and $c\in \mS(\calA_+)$, then $ \mu_{p',c}
\geq \mu_{p,c}-d_{\nu}(p,p')$.
\end{theorem}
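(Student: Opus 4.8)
The plan is to reduce to the case where $c$ stabilises $p$ and $d_\nu(p,p')<\mu_{p,c}$ (all other cases being immediate), to re-express both stability margins through the scalars $\widetilde{K}G$ and $\widetilde{K}G'$ by means of a normalised-coprime-factorisation identity, to prove the pointwise lower bound $|(\widetilde{K}G')(iy)|\ge\mu_{p,c}-d_\nu(p,p')$ on $i\mR$ by an elementary trigonometric estimate, and finally to upgrade this pointwise nonvanishing to invertibility of $\widetilde{K}G'$ in $\calA_+$ (which is exactly what it means for $c$ to stabilise $p'$). The last step --- simultaneously controlling the almost periodic part of $\widetilde{K}G'$ and its index $W$ --- is where the structure of $\calA_+$ genuinely enters, and is the main obstacle.

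For the reductions: since $|(\widetilde{K}G)(iy)|=|\widetilde{K}(iy)G(iy)|\le\|\widetilde{K}(iy)\|\,\|G(iy)\|=1$ for all $y$ (both $G(iy)$ and the row $\widetilde{K}(iy)$ are unit vectors of $\mC^2$, by $n^{*}n+d^{*}d=1$ and $x^{*}x+y^{*}y=1$), we get $\mu_{p,c}\le 1$; hence if $d_\nu(p,p')\ge\mu_{p,c}$ (in particular if $d_\nu(p,p')=1$), or if $c$ does not stabilise $p$, then $\mu_{p,c}-d_\nu(p,p')\le 0\le\mu_{p',c}$ and we are done. So assume $c$ stabilises $p$ and $\delta:=d_\nu(p,p')<\mu_{p,c}\le1$; then $\delta<1$, so by the definition of $d_\nu$ we have $G^{*}G'\in\inv\calA$, $W(G^{*}G')=(0,0)$ and $\delta=\|\widetilde{G}'G\|_\infty$. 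Using $p=nd^{-1}$ and $c=xy^{-1}$ one verifies the standard identity $H(p,c)=G(\widetilde{K}G)^{-1}\widetilde{K}$ in $\mF(\calA_+)^{2\times2}$; consequently $c$ stabilises $p$ if and only if $(\widetilde{K}G)^{-1}\in\calA_+$, and then $\|H(p,c)\|_\infty=\|(\widetilde{K}G)^{-1}\|_\infty$ (since the rank-one matrix $G(iy)\widetilde{K}(iy)$ has operator norm $\|G(iy)\|\,\|\widetilde{K}(iy)\|=1$), that is, $\mu_{p,c}=\inf_{y\in\mR}|(\widetilde{K}G)(iy)|$; the same statements hold with $p$ replaced by $p'$.

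Now the pointwise estimate. Fix $y\in\mR$. From $n^{*}n+d^{*}d=1$ and $\widetilde{G}G=0$, the pair $\{G(iy),\widetilde{G}(iy)^{*}\}$ is an orthonormal basis of $\mC^2$ (and likewise $\{G'(iy),\widetilde{G}'(iy)^{*}\}$); since an element of $\calA$ is determined by its values on $i\mR$, the associated decomposition of $G'$ is the $\calA^{2\times1}$-identity $G'=G\,(G^{*}G')+\widetilde{G}^{*}(\widetilde{G}G')$, and $|(G^{*}G')(iy)|^{2}+|(\widetilde{G}G')(iy)|^{2}=1$, while expanding the row $\widetilde{K}(iy)$ in the dual orthonormal basis $\{G(iy)^{*},\widetilde{G}(iy)\}$ gives $\widetilde{K}(iy)=(\widetilde{K}G)(iy)\,G(iy)^{*}+(\widetilde{K}\widetilde{G}^{*})(iy)\,\widetilde{G}(iy)$ with $|(\widetilde{K}G)(iy)|^{2}+|(\widetilde{K}\widetilde{G}^{*})(iy)|^{2}=1$. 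Substituting, $(\widetilde{K}G')(iy)=(\widetilde{K}G)(iy)(G^{*}G')(iy)+(\widetilde{K}\widetilde{G}^{*})(iy)(\widetilde{G}G')(iy)$; since $|(\widetilde{K}G)(iy)|\ge\mu_{p,c}$ and $|(\widetilde{G}G')(iy)|=|(\widetilde{G}'G)(iy)|\le\delta$ (note $\widetilde{G}G'=-\widetilde{G}'G$), writing the moduli of these two pairs of orthonormal coordinates as $(\cos\alpha,\sin\alpha)$ and $(\cos\beta,\sin\beta)$ with $\alpha\le\arccos\mu_{p,c}$ and $\beta\le\arcsin\delta$ yields $|(\widetilde{K}G')(iy)|\ge\cos(\alpha+\beta)\ge\cos(\arccos\mu_{p,c}+\arcsin\delta)=\mu_{p,c}\sqrt{1-\delta^{2}}-\delta\sqrt{1-\mu_{p,c}^{2}}$, which is $\ge\mu_{p,c}-\delta>0$ because $t\mapsto t/(1+\sqrt{1-t^{2}})$ is nondecreasing on $[0,1]$ and $\mu_{p,c}\ge\delta$. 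In particular $\widetilde{K}G'$ is zero-free on $i\mR$.

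It remains to upgrade this to $\widetilde{K}G'\in\inv\calA_+$; then $H(p',c)=G'(\widetilde{K}G')^{-1}\widetilde{K}\in\calA_+^{2\times2}$, so $c$ stabilises $p'$, and $\mu_{p',c}=\|(\widetilde{K}G')^{-1}\|_\infty^{-1}=\inf_y|(\widetilde{K}G')(iy)|\ge\mu_{p,c}-\delta=\mu_{p,c}-d_\nu(p,p')$, which is the assertion. Recall that $F=\widehat{f_a}+F_{AP}\in\calA$ is invertible iff $F(iy)\ne0$ for all $y$ and $\inf_y|F_{AP}(iy)|>0$, and that $x\in\calA_+\cap\inv\calA$ lies in $\inv\calA_+$ iff $W(x)=(0,0)$; moreover, since $\widehat{L^{1}(\mR)}$ is an ideal in $\calA$, the ``almost periodic part'' map $F\mapsto F_{AP}$ is an additive and multiplicative self-map of $\calA$, and it is contractive for $\sup_{i\mR}|\cdot|$ --- because $F_{AP}(iy_0)=\lim_n F(i(y_0+\tau_n))$ for suitable $\tau_n\to+\infty$, the summand $\widehat{f_a}$ decaying at infinity --- so that in particular $\inf_y|(\widetilde{K}G)_{AP}(iy)|\ge\mu_{p,c}>\delta$ and $\|(\widetilde{G}G')_{AP}\|_\infty\le\delta$. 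Taking $AP$-parts of the $\calA$-identities $G^{*}G=1$, $\widetilde{G}G=0$, $\widetilde{K}\widetilde{K}^{*}=1$ and $GG^{*}+\widetilde{G}^{*}\widetilde{G}=I_2$ shows that $G_{AP}(iy),\widetilde{G}_{AP}(iy),\widetilde{K}_{AP}(iy)$ obey exactly the orthonormality relations used above, so the pointwise estimate of the previous paragraph applies verbatim to the $AP$-parts. Now consider the homotopy $h_\tau:=(\widetilde{K}G)(G^{*}G')+\tau\,(\widetilde{K}\widetilde{G}^{*})(\widetilde{G}G')\in\calA$, $\tau\in[0,1]$ (continuous, being affine in $\tau$), with $h_0=(\widetilde{K}G)(G^{*}G')\in\inv\calA$ and $W(h_0)=W(\widetilde{K}G)+W(G^{*}G')=(0,0)$ (by (I1), (I4), $\widetilde{K}G$ being a unit of $\calA_+$), and $h_1=\widetilde{K}\bigl(G(G^{*}G')+\widetilde{G}^{*}(\widetilde{G}G')\bigr)=\widetilde{K}G'$. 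The estimate of the previous paragraph, applied to $h_\tau(iy)$ and --- via multiplicativity of $(\cdot)_{AP}$ --- to $(h_\tau)_{AP}(iy)=(\widetilde{K}G)_{AP}(iy)(G^{*}G')_{AP}(iy)+\tau(\widetilde{K}\widetilde{G}^{*})_{AP}(iy)(\widetilde{G}G')_{AP}(iy)$ (now using $\inf_y|(\widetilde{K}G)_{AP}(iy)|\ge\mu_{p,c}>\delta$), shows $h_\tau(iy)\ne0$ for all $y$ and $\inf_y|(h_\tau)_{AP}(iy)|>0$ for each $\tau$, whence $h_\tau\in\inv\calA$ throughout $[0,1]$; by homotopic invariance of the index, $W(\widetilde{K}G')=W(h_1)=W(h_0)=(0,0)$, so $\widetilde{K}G'\in\calA_+\cap\inv\calA$ with zero index, i.e.\ $(\widetilde{K}G')^{-1}\in\calA_+$, completing the argument. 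As indicated, the crux is precisely this passage from zero-freeness on $i\mR$ to invertibility in $\calA_+$: one must track both the almost periodic part (for invertibility in $\calA$) and the winding-number index $W$ (for invertibility in $\calA_+$), and the device that makes both manageable is that $F\mapsto F_{AP}$ is a contractive algebra homomorphism under which all the relevant isometry identities survive.
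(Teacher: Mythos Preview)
The paper does not contain its own proof of Theorem~\ref{theorem_stab_margin_nu_metric}; it is quoted from \cite{BalSas}. So there is no in-paper argument to compare against, and your proposal must be judged on its own merits.

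Your argument is correct. The reductions and the identity $H(p,c)=G(\widetilde{K}G)^{-1}\widetilde{K}$ are standard, and the derivation of $\mu_{p,c}=\inf_{y}|(\widetilde{K}G)(iy)|$ from the rank-one structure is clean. The pointwise trigonometric bound
\[
|(\widetilde{K}G')(iy)|\;\geq\;\mu_{p,c}\sqrt{1-\delta^{2}}-\delta\sqrt{1-\mu_{p,c}^{2}}\;\geq\;\mu_{p,c}-\delta
\]
is verified correctly via the monotonicity of $t\mapsto t/(1+\sqrt{1-t^{2}})=\tan\!\bigl(\tfrac{1}{2}\arcsin t\bigr)$. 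The crucial step---promoting nonvanishing on $i\mR$ to membership in $\inv\calA_+$---is handled well: the observation that $F\mapsto F_{AP}$ is a \emph{contractive} unital $*$-homomorphism (because $F_{AP}(iy_0)$ is a subsequential limit of values $F(i(y_0+\tau_n))$ by Bohr almost-periodicity and Riemann--Lebesgue) transports all the orthonormality identities $G^{*}G=1$, $\widetilde{G}G=0$, $GG^{*}+\widetilde{G}^{*}\widetilde{G}=I_2$, $\widetilde{K}\widetilde{K}^{*}=1$ to the $AP$ level, together with the bounds $\inf_y|(\widetilde{K}G)_{AP}|\ge\mu_{p,c}$ and $\|(\widetilde{G}G')_{AP}\|_\infty\le\delta$. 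Hence the same estimate gives $\inf_y|(h_\tau)_{AP}(iy)|>0$ along the affine homotopy $h_\tau$, so each $h_\tau\in\inv\calA$, and (I3) transfers $W(h_0)=W(\widetilde{K}G)+W(G^{*}G')=(0,0)$ to $W(\widetilde{K}G')=(0,0)$; then (I4) yields $\widetilde{K}G'\in\inv\calA_+$.

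This is very much in the spirit of the abstract argument in \cite{BalSas}: the identity $\widetilde{K}G'=(\widetilde{K}G)(G^{*}G')+(\widetilde{K}\widetilde{G}^{*})(\widetilde{G}G')$ together with a homotopy collapsing the ``small'' second term is exactly the mechanism used there, and the $\cos(\alpha+\beta)$ estimate is the standard sharpening that even gives the stronger bound $\arcsin\mu_{p',c}\ge\arcsin\mu_{p,c}-\arcsin d_\nu(p,p')$. Your distinctive contribution here is making completely explicit, in the concrete $\calA_+$ setting, why invertibility in $\calA$ (and not merely nonvanishing on $i\mR$) persists along the homotopy: the contractive-homomorphism property of the $AP$-part map is the right tool, and you deploy it cleanly. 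One cosmetic point: you invoke $\mu_{p,c}\le 1$ before establishing $\mu_{p,c}=\inf_y|(\widetilde{K}G)(iy)|$; logically the latter (or at least $\|H(p,c)(iy)\|=|(\widetilde{K}G)(iy)|^{-1}\ge 1$) should come first.
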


The above result says that stabilizability is a robust property of the
plant, since if $c$ stabilizes $p$ with a stability margin
$\mu_{p,c}>m$, and $p'$ is another plant which is close to $p$ in the
sense that $d_\nu(p',p)\leq m$, then $c$ is also guaranteed to
stabilize $p'$.

We will now derive an alternative expression for the $\nu$-metric,
which is reminiscent of Georgiou's formula for the gap-metric from
\cite{Geo}.

\begin{proposition}
\label{prop_alt_exp_nu_metric}
If $p_1, p_2\in \mS(\calA_+)$, then 
 $$
d_\nu(p_1, p_2)= \displaystyle \inf_{\substack{q \in \textrm{\em inv } \calA ,\\
    W( q)=(0,0)}} \|G_1 -G_2 q\|_\infty.
$$
\end{proposition}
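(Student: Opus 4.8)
The plan is to reduce the identity to a pointwise Pythagorean relation on the imaginary axis and then match it against the two regimes in the definition of $d_\nu$. First I would use that the factorizations $p_i=n_id_i^{-1}$ are normalized: for each $y\in\mR$ the column vectors $G_2(iy)$ and $\widetilde G_2(iy)^{*}$ form an orthonormal basis of $\mC^2$, and $G_2^{*}G_2=1$, $\widetilde G_2G_2=0$ as elements of $\calA$. Setting $h:=G_2^{*}G_1=(G_1^{*}G_2)^{*}\in\calA$ and $g:=\widetilde G_2G_1\in\calA_+$, the coordinates of $G_1(iy)-G_2(iy)q(iy)$ in this basis are $h(iy)-q(iy)$ and $g(iy)$, so for every $q\in\calA$ and every $y$
$$
|G_1(iy)-G_2(iy)q(iy)|^2=|h(iy)-q(iy)|^2+|g(iy)|^2 ,
$$
and, with $q=0$, also $|h(iy)|^2+|g(iy)|^2=1$; in particular $|h|\le1$ on $i\mR$ and $\|G_1-G_2q\|_\infty\ge\|\widetilde G_2G_1\|_\infty$ for every $q\in\calA$. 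This crude bound already yields ``$\ge$'' whenever $G_1^{*}G_2\in\inv\calA$ and $W(G_1^{*}G_2)=(0,0)$, i.e. in the case $d_\nu(p_1,p_2)=\|\widetilde G_2G_1\|_\infty$.

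For ``$\le$'': if $G_1^{*}G_2\in\inv\calA$ and $W(G_1^{*}G_2)=(0,0)$, then by (I2) the element $q_0:=G_2^{*}G_1$ lies in $\inv\calA$ with $W(q_0)=(0,0)$, and the identity gives $\|G_1-G_2q_0\|_\infty=\|\widetilde G_2G_1\|_\infty=d_\nu(p_1,p_2)$. Otherwise $d_\nu(p_1,p_2)=1$, and I would test the infimum against the constants $q\equiv\varepsilon$, $\varepsilon>0$, each invertible in $\calA$ with $W(q)=(0,0)$; from $|h|\le1$ the identity yields $\|G_1-G_2\varepsilon\|_\infty\le1+\varepsilon$, whence the infimum is $\le1$. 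So $\inf\le d_\nu(p_1,p_2)$ in all cases.

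It remains to prove ``$\ge$'' when $d_\nu(p_1,p_2)=1$: I must show that $\|G_1-G_2q\|_\infty^2\le1-\delta$ for some admissible $q$ and some $\delta>0$ is impossible. The identity and $|h|^2+|g|^2=1$ give $|h(iy)|^2-|h(iy)-q(iy)|^2\ge\delta$ for every $y$, hence (using $|h|\le1$ and $|h-q|\le\sqrt{1-\delta}$) $|h(iy)|-|h(iy)-q(iy)|\ge\delta/2$ for every $y$. I would then run the affine homotopy $H(t):=(1-t)h+tq\in\calA$, $t\in[0,1]$, from $h$ to $q$: pointwise $|H(t)(iy)|\ge|h(iy)|-|q(iy)-h(iy)|\ge\delta/2$, so $H(t)(iy)$ never vanishes, and once $H(t)\in\inv\calA$ is established for every $t$, the norm-continuity of $t\mapsto H(t)$ together with the homotopic invariance of the index forces $W(h)=W(H(0))=W(H(1))=W(q)=(0,0)$. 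In particular $h\in\inv\calA$, so by (I2) $G_1^{*}G_2=h^{*}$ is invertible in $\calA$ with $W(G_1^{*}G_2)=(0,0)$, contradicting the hypothesis of this case. Combined with the previous paragraphs, this yields $d_\nu(p_1,p_2)=\inf_q\|G_1-G_2q\|_\infty$.

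The step I expect to be the main obstacle is exactly the claim that $H(t)$ remains in $\inv\calA$ throughout; this is the only place where the concrete structure of $\calA$, rather than the abstract index axioms (I1)--(I4), is used. Pointwise non-vanishing of $H(t)$ is immediate, but the invertibility criterion for $\calA$ also requires $\inf_y|H(t)_{AP}(iy)|>0$, whereas the almost periodic part $H(t)_{AP}=(1-t)h_{AP}+tq_{AP}$ may well vanish at finite points if the $L^1$-part compensates. To handle this I would note that $H(t)-H(t)_{AP}$ is the corresponding convex combination of $h-h_{AP}$ and $q-q_{AP}$, both of which lie in $C_0$ by Riemann--Lebesgue, so $H(t)-H(t)_{AP}\to 0$ at $\pm i\infty$ \emph{uniformly in} $t$; together with $|H(t)(iy)|\ge\delta/2$ this gives $|H(t)_{AP}(iy)|\ge\delta/4$ for all large $|y|$, uniformly in $t$. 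Since the translation numbers of an almost periodic function form a relatively dense set, this lower bound propagates from a neighbourhood of $\pm i\infty$ to all of $i\mR$, so $\inf_y|H(t)_{AP}(iy)|\ge\delta/4>0$ and the criterion applies.
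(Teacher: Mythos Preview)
Your argument is correct, and in the first two cases (the lower bound via the Pythagorean identity, the choice $q_0=G_2^*G_1$, and the test functions $q\equiv\varepsilon$) it is essentially the paper's proof rewritten in pointwise coordinates.

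The genuine difference is in the last case, showing that $\|G_1-G_2q\|_\infty<1$ forces $G_1^*G_2\in\inv\calA$ with $W(G_1^*G_2)=(0,0)$. The paper does this with a pure Banach-algebra trick: multiply by $G_1^*$ to get $\|1-G_1^*G_2q\|_\infty<1$, so $G_1^*G_2q$ lies in the open unit ball around $1$ in $\calA$; invertibility and the straight-line homotopy to $1$ are then automatic from the Neumann series, and only the abstract axioms (I1)--(I4) are used. Your route instead runs the affine homotopy from $h=G_2^*G_1$ to $q$ and must verify invertibility of each $H(t)$ via the concrete criterion $\inf_y|H(t)_{AP}(iy)|>0$, which you handle by a Riemann--Lebesgue plus almost-periodic propagation argument. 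That argument is sound (for each fixed $t$ one chooses an $\epsilon$-almost period $\tau>2M$ and transports the bound from $|y|>M$ back to $[-M,M]$), but it is specific to the structure of $\calA$ and noticeably heavier than the paper's two-line observation. The payoff of the paper's approach is that it would go through verbatim in the abstract $(R,S,\iota)$ framework of \cite{BalSas}, whereas your version is tied to this particular example; conversely, your pointwise identity $|G_1-G_2q|^2=|h-q|^2+|g|^2$ makes the geometry completely transparent and gives the sharp bound $\|G_1-G_2\varepsilon\|_\infty\le 1+\varepsilon$ directly rather than via the triangle inequality.
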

\begin{proof} Let $q \in \inv \calA $ and 
$W(q)=(0,0)$. We have 
\begin{eqnarray*}
\|G_1 -G_2 q\|_\infty &=& 
\left\| \left[ \begin{array}{cc} G_2^* \\ 
\widetilde{G}_2 \end{array} \right] (G_1-G_2 q) \right\|_\infty 
\quad \textrm{(as }  
\left[\begin{array}{cc} G_2 & \widetilde{G}_2^* \end{array}\right]
\left[\begin{array}{c} G_2^* \\\widetilde{G}_2 \end{array}\right]=I\textrm{)} \\
&=& 
\left\| \left[\begin{array}{c} G_2^* G_1-q\\ \widetilde{G}_2G_1 \end{array}\right]  \right\|_\infty 
\quad \textrm{(since } \widetilde{G}_2 G_2=0 \textrm{ and }  G_2^*G_2=I\textrm{)} \\
&\geq & \| \widetilde{G}_2G_1 \|_\infty .\phantom{\left[\begin{array}{c} A \\B \end{array}\right]}
\end{eqnarray*}
So if $G_2^* G_1 \in \inv \calA$ and $W (G_2^* G_1)=(0,0)$, then from
the above it follows that $ \|G_1 -G_2 q\|_\infty \geq \|
\widetilde{G}_2G_1 \|_\infty=d_\nu(p_1,p_2)$.  As the choice of $q$
above was arbitrary, we obtain
\begin{equation}
\label{eq_nu_metrix_alt_exp_ineq_1}
\inf_{\substack{q \in \inv \calA ,\\ 
W( q)=(0,0)}} \|G_1 -G_2 q\|_\infty \geq d_\nu(p_1,p_2).
\end{equation}
If we define $q_0:= G_2^* G_1 \in \calA$,
then $q_0\in \inv \calA$ and $W (q_0)=(0,0)$, and so 
\begin{eqnarray*}
  \inf_{\substack{q \in \inv \calA ,\\ 
      W(q)=(0,0)}} \|G_1 -G_2 q\|_\infty 
  &\leq&  \|G_1-G_2 q_0\|_\infty
  =
  \left\| \left[\begin{array}{c} G_2^* G_1-q_0\\ \widetilde{G}_2G_1 \end{array}\right]  \right\|_\infty 
  \\
  &=&\left\| \left[\begin{array}{c} 0\\ \widetilde{G}_2G_1 \end{array}\right]  \right\|_\infty 
  =
  \|\widetilde{G}_2G_1\|_\infty =d_\nu(p_1,p_2).
\end{eqnarray*}
From this and \eqref{eq_nu_metrix_alt_exp_ineq_1}, the claim in the
proposition follows for the case when $G_2^* G_1\in \inv \calA$ and $W
(G_2^* G_1)=(0,0)$.

Now let $q\in \inv \calA$ be such that $W (q)=(0,0)$ and
$\|G_1-G_2 q\|_\infty <1$. Using $G_1^* G_1=1$, we see that
$$
\|1-G_1^* G_2 q\|_\infty =\|G_1^*(G_1-G_2 q)\|_\infty 
\leq\|G_1^*\|_\infty \|G_1-G_2 q\|_\infty<1\cdot 1=1.
$$
So $ G_1^* G_2 q=1-(1-G_1^* G_2 q) $ is invertible as an element of
$\calA$. Consider the map $H:[0,1]\rightarrow \inv \calA $ given by $
H(t)= 1-t(1-G_1^* G_2 q)$, $t\in [0,1]$.  By the homotopic invariance
of the index, 
$$
(0,0)=W(1)=W (H(0))=W (H(1))=W(G_1^* G_2 q).
$$
As $W(q)=(0,0)$, we obtain that $W(G_1^* G_2)=(0,0)$. So we have shown
that if there is a $q\in \calA$ such that $q\in \inv \calA$, $W
(q)=(0,0)$ and $\|G_1-G_2 q\|_\infty <1$, then $G_1^* G_2 \in \inv
\calA$ and $W(G_1^* G_2)=(0,0)$. Thus if either $G_1^* G_2
\not\in \inv \calA$ or $G_1^* G_2 \in \inv \calA$ but $W(G_1^*
G_2)\neq (0,0)$, then for all $q\in \calA$ such that $q\in \inv
\calA$, $W (q)=(0,0)$, we have that $\|G_1-G_2 q\|_\infty \geq 1$, and
so
$$
\inf_{\substack{q \in \inv \calA ,\\ 
W(q)=(0,0)}} \|G_1 -G_2 q\|_\infty \geq 1=d_\nu(p_1,p_2).
$$
Also, with $q_n:=\displaystyle \frac{1}{n} I$, $q_n\in \inv \calA$ and
$W ( q_n)=(0,0)$. We have
$$
\|G_1-G_2 q_n\|_\infty \leq \|G_1\|_\infty + \|G_2\|_\infty
\|q_n\|_\infty\leq 1+1\cdot \frac{1}{n}.
$$
Hence
$$
\inf_{\substack{q \in \inv \calA ,\\
    W(q)=(0,0)}} \|G_1 -G_2 q\|_\infty \leq \inf_n \|G_1 -G_2
q_n\|_\infty\leq \inf_n \left(1+\frac{1}{n}\right)=1=d_\nu(p_1,p_2).
$$
Consequently, 
 $
\displaystyle \inf_{\substack{q \in \inv \calA ,\\ 
W(q)=(0,0)}} \|G_1 -G_2 q\|_\infty=1=d_\nu(p_1,p_2)$. 
\end{proof}

\section{The gap-metric}
\label{section_gap_metric}

In this section we will recall the gap-metric topology for unstable
plants over the ring $\calA_+$. We will also prove a few technical
lemmas which will be used in the next section in order to prove our
main result.

\begin{definition}[Graph of a system] 
  For $p\in \mS(\calA_+)$, with the normalized coprime factorization $
  p= n d^{-1}$, we define the {\em graph of} $p$, denoted by $\calG$,
  to be the following subspace of the Hardy space
  $H^2(\mC^2)$:\label{pageref_calG}
$$
\calG=G H^2=\left\{\left[ \begin{array}{cc} n\varphi \\ d
      \varphi\end{array} \right]: \varphi \in H^2\right\}.
$$
\end{definition}

Using the fact that there exist $x,y\in \calA_+$ such that $xn+yd=1$,
it is easy to see that the graph $\calG$ is a {\em closed} subspace of
$H^2\times H^2$. We denote the orthogonal projection from $H^2\times
H^2$ onto $\calG$ by $P_{\calG}$.\label{pageref_P_calG}

\begin{definition}[Gap-metric $d_g$]
\label{def_graph_metric}
For $p_1, p_2 \in \mS(\calA_+)$, with the normalized coprime
factorizations $p_1= n_{1} d_{1}^{-1}$ and $p_2= n_{2} d_{2}^{-1}$, we
define \label{pageref_d_g}
\begin{equation}
\label{eq_graph_metric}
d_{g} (p_1,p_2 ):=
\|P_{\calG_1}-P_{\calG_2}\|_{\calL(H^2\times H^2)}.
\end{equation}
\end{definition}

We will need a few technical results on the gap-metric $d_g$. For a
self-contained account of these results, we refer the reader to
\cite{Par}. It can be checked that $d_g$ given by
\eqref{eq_graph_metric} is well-defined.  Since the gap-metric is a
metric on the set of closed subspaces of a Hilbert space, it follows
that $d_g$ given by \eqref{eq_graph_metric} is a metric on
$\mS(\calA_+)$.

For $p_1,p_2 \in \mS(\calA_+)$, $ d_g(p_1,p_2)=\max\{
\vec{\delta}(p_1,p_2),\vec{\delta}(p_2,p_1)\}$, where $
\vec{\delta}(\cdot,\cdot)$ denotes the {\em directed gap}, defined
by\label{pageref_vec_delta}
$$
\vec{\delta}(p_1,p_2):= \|(I-P_{\calG_2})P_{\calG_1}\|_{\calL(H^2\times H^2)}.
$$
If $d_g(p_1,p_2)<1$, then $
d_g(p_1,p_2)=\vec{\delta}(p_1,p_2)=\vec{\delta}(p_2,p_1)$
\cite[Prop.~3, p.675]{GeoSmi}. In \cite{Geo}, it was shown that
$$
d_g(p_1,p_2)=\max\Big\{\inf_{q\in H^\infty} \|G_1-G_2 q\|_\infty\;,\;
  \inf_{q\in H^\infty} \|G_2-G_1 q\|_\infty\Big\}.
$$
For $p_1,p_2\in \mS(\calA_+)$, the infimums above can be taken over
$\calA_+$ instead of $H^\infty$, and this follows from
\cite[Theorem~11.3.3]{Mik}.

\begin{lemma}
\label{lemma_Kalle}
If $p_1,p_2\in \mS(\calA_+)$, then 
$$
\inf_{q\in H^\infty} \|G_1-G_2 q\|_\infty = \inf_{q\in \calA_+}
\|G_1-G_2 q\|_\infty.
$$
\end{lemma}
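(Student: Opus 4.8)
The plan is to prove the two inequalities separately. The inequality $\inf_{q\in H^\infty}\|G_1-G_2q\|_\infty\le\inf_{q\in\calA_+}\|G_1-G_2q\|_\infty$ requires nothing, since $\calA_+\subseteq H^\infty$ and restricting the feasible set of an infimum cannot lower its value. So the entire content lies in the reverse inequality: given $q\in H^\infty$ and $\varepsilon>0$, I must produce $\tilde q\in\calA_+$ with $\|G_1-G_2\tilde q\|_\infty\le\|G_1-G_2q\|_\infty+\varepsilon$.

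The first step I would take is to put the expression into a standard model-matching form, exploiting that $p_2=n_2d_2^{-1}$ is a \emph{normalized} coprime factorization. As in the proof of Proposition~\ref{prop_alt_exp_nu_metric}, the $2\times2$ matrix $\left[\begin{array}{cc}G_2 & \widetilde{G}_2^*\end{array}\right]$ is unitary-valued on $i\mR$ (because $n_2^*n_2+d_2^*d_2=1$), hence so is its adjoint $\left[\begin{array}{c}G_2^*\\ \widetilde{G}_2\end{array}\right]$; since left multiplication by a matrix that is unitary-valued on $i\mR$ preserves the $L^\infty$-norm, and since $G_2^*G_2=1$, $\widetilde{G}_2G_2=0$, we get for every $q$
$$
\|G_1-G_2q\|_\infty=\left\|\left[\begin{array}{c}G_2^*G_1-q\\ \widetilde{G}_2G_1\end{array}\right]\right\|_\infty .
$$
Here $\widetilde{G}_2G_1=-d_2n_1+n_2d_1$ is a \emph{fixed} element of $\calA_+$, while $\Phi:=G_2^*G_1=n_2^*n_1+d_2^*d_1$ lies in $\calA$ but in general not in $\calA_+$, because the involution turns analytic (one-sided) data into anti-analytic data. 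Thus $\inf_{q\in H^\infty}\|G_1-G_2q\|_\infty$ is exactly the value of a two-block, Nehari-type distance problem whose data live in $\calA$ and in $\calA_+$, and the lemma says this value does not change if the correctors $q$ are confined to $\calA_+$.

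This last statement is the real mathematical content, and here I would invoke the structural theory of the algebra $\calA_+$ rather than reprove it: \cite[Theorem~11.3.3]{Mik} asserts precisely that, for model-matching data arising from normalized coprime factorizations over $\calA_+$, the infimum over $H^\infty$ agrees with the infimum over $\calA_+$. By exchanging the roles of $p_1$ and $p_2$ the companion identity $\inf_{q\in H^\infty}\|G_2-G_1q\|_\infty=\inf_{q\in\calA_+}\|G_2-G_1q\|_\infty$ follows as well, which together with Georgiou's formula is what will allow $d_g$ to be computed with $\calA_+$-correctors in Section~\ref{section_equivalence}.

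I expect the difficulty to be concentrated entirely in the cited input \cite[Theorem~11.3.3]{Mik}, and not in the soft manipulations above. One cannot simply run the classical Adamjan--Arov--Krein scheme to pull a near-optimal corrector into $\calA_+$: for a symbol $\Phi\in\calA$ the relevant Hankel-type operator need not be compact (its almost-periodic component contributes non-compact pieces), so Nehari's theorem alone does not locate an extremal corrector inside the small algebra $\calA_+$, and one genuinely needs the finer description of $\calA_+$ and of its embedding in $\calA$. By contrast, the unitary rewriting and the bookkeeping $\widetilde{G}_2G_1\in\calA_+$, $G_2^*G_1\in\calA$ are routine.
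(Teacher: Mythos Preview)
Your proposal is correct and follows essentially the same route as the paper: both arguments note the trivial inequality from $\calA_+\subset H^\infty$ and obtain the reverse by invoking \cite[Theorem~11.3.3]{Mik} to upgrade a near-optimal $H^\infty$ corrector to one in $\calA_+$. The only cosmetic difference is that the paper packages the data via the matrices $V=\left[\begin{smallmatrix}G_2 & G_1\\ 0 & 1\end{smallmatrix}\right]$ and $W=V^\star\left[\begin{smallmatrix}I & 0\\ 0 & -M^2\end{smallmatrix}\right]V$ to match Mikkola's framework and then explicitly checks the standing hypotheses, whereas you first perform the unitary rewriting from Proposition~\ref{prop_alt_exp_nu_metric}; both reductions feed the same cited result.
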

\begin{proof} Clearly
 $
m:= \displaystyle\inf_{q\in H^\infty} \|G_1-G_2 q\|_\infty \leq \displaystyle\inf_{q\in \calA_+}
\|G_1-G_2 q\|_\infty=:M.$ 
Define
$$
V=\left[\begin{array}{cccc} G_2 & G_1 \\ 0 & 1 \end{array}\right],
\quad W:=V^\star \left[\begin{array}{cccc} I & 0 \\0 & -M^2
  \end{array}\right] V.
$$
(For $X\in (H^\infty)^{p\times m}$, $X^\star\in
(L^\infty)^{m\times p}$ is defined by $X^\star(iy)=(X(iy))^*$, $y\in
\mR$.) \label{pageref_star_again}  Suppose that $m<M$. Then there exists a $q\in H^\infty$ such
that $\|G_1-G_2 q\|_\infty <M$. Now we apply \cite[Theorem~11.3.3,
p.654]{Mik} to conclude that the $q$ can in fact be chosen in
$\calA_+$. For this, a few technical assumptions have to be verified first, and
we give these details in the following paragraph for the interested
reader.

(First of all, the Standing Hypothesis \cite[11.0.1, p.611]{Mik} is
satisfied, since $\calA_+$ does satisfy \cite[Hypothesis 8.4.7.,
p.384]{Mik}, by \cite[Theorem 8.4.9($\beta$), p.385]{Mik}. Secondly,
the Standing Hypothesis \cite[11.3.1, p.654]{Mik} is satisfied, since
$G_2^* G_2=1$. Actually, there are two extraneous assumptions in
11.3.1, but neither is used in the part of the proofs required here,
and these extraneous assumptions are anyway satisfied in our case.
Now as the Assumption (FI1$\frac{1}{2}$s) of \cite[Theorem~11.3.3,
p.654]{Mik} holds, also (FI13s) holds. By the last sentence of
\cite[Theorem~11.3.6, p.659]{Mik}, as $W$ has entries from $\calA_+$,
there exists a $q\in \calA_+$ such that $ \|G_1-G_2 q\|_\infty<M$.)

Consequently, $m=M$.
\end{proof}

We use the notation $P_{\calG_1}|_{\calG_2}$ to mean the restriction
of $P_{\calG_1}$ to $\calG_2$, namely, the operator from $\calG_2$ to
$\calG_1$, given by \label{pageref_P_calG_1_calG_2}
$$
P_{\calG_1}|_{\calG_2} g_2= P_{\calG_1} g_2 ,\quad g_2\in \calG_2.
$$
Then $ \ker (P_{\calG_1}|_{\calG_2})= \{g_2\in \calG_2: P_{\calG_1}
g_2 =0\} =\calG_2\cap (\ker P_{\calG_1})=\calG_2\cap \calG_1^\perp$.
Also, for $g_1\in \calG_1$ and $g_2\in\calG_2$, we have
\begin{eqnarray*}
\langle P_{\calG_1}|_{\calG_2} g_2 ,g_1\rangle_{\calG_1} 
&=& \langle
P_{\calG_1} g_2 ,g_1\rangle_{\calG_1} = \langle g_2
,g_1\rangle_{H^2(\mC^2)}\\
& =& \langle g_2 ,P_{\calG_2}
g_1\rangle_{H^2(\mC^2)} = \langle g_2 ,P_{\calG_2}
g_1\rangle_{\calG_2} \\
&=& \langle g_2 ,P_{\calG_2} |_{\calG_1}
g_1\rangle_{\calG_2},
\end{eqnarray*}
and so $( P_{\calG_1}|_{\calG_2})^*= P_{\calG_2} |_{\calG_1}$. Thus $
\ker ((P_{\calG_1}|_{\calG_2})^*)=\ker (P_{\calG_2}
|_{\calG_1})=\calG_1\cap \calG_2^\perp$. So if
$P_{\calG_1}|_{\calG_2}$ is a Fredholm operator \cite[\S
2.5.1,p.218]{NikA}, then its Fredholm index is given by $ \dim
(\calG_2\cap \calG_1^\perp)-\dim (\calG_1\cap \calG_2^\perp)$.

We will use the following result from \cite[p.201]{Nik}.

\begin{lemma}[Lemma on Closed Subspaces]
\label{lemma_cl_sub}
Let $H$ be a Hilbert space and let $U,V$ be subspaces of $H$. Then the
following are equivalent:
\begin{itemize}
\item[(S1)] $U\cap V^\perp=\{0\}$.
\item[(S2)] Closure of $P_U V$ is $U$.
\end{itemize}
Also, the following are equivalent:
\begin{itemize}
\item[(S3)] $P_U V=U$ and $V\cap U^\perp=\{0\}$.
\item[(S4)] $\|(I-P_V)P_U\|<1$ and $\|(I-P_U)P_V\|<1$. 
\end{itemize}
\end{lemma}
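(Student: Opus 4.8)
The plan is to treat the two equivalences separately, in each case translating the geometric statement about the subspaces into an operator-theoretic statement about $T:=P_U|_V\colon V\to U$ and its adjoint $T^{*}=P_V|_U\colon U\to V$, the adjoint relation being exactly the one computed just above for $P_{\calG_1}|_{\calG_2}$. Note that, since $P_U$ and $P_V$ are orthogonal projections, $U$ and $V$ are closed, hence Hilbert spaces in their own right; this is what makes the Banach-space machinery below applicable.

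For the equivalence of (S1) and (S2), I would first observe that for $u\in U$, self-adjointness of $P_U$ together with $P_Uu=u$ gives $\langle u,P_Uv\rangle=\langle u,v\rangle$ for every $v\in V$. Hence $u$ is orthogonal to $P_UV$ (equivalently, lies in the orthogonal complement of $P_UV$ \emph{inside} $U$) if and only if $u\perp V$, i.e. $u\in V^{\perp}$. Therefore the orthogonal complement of $P_UV$ within $U$ is precisely $U\cap V^{\perp}$, and $\overline{P_UV}=U$ holds exactly when that complement is $\{0\}$. This yields (S1)$\Leftrightarrow$(S2) at once.

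For the equivalence of (S3) and (S4), the crux is the norm identity
$$
\|(I-P_V)P_U\|^{2}=\|P_U-P_UP_VP_U\|=\sup_{u\in U,\ \|u\|\le 1}\big(\|u\|^{2}-\|P_Vu\|^{2}\big)=1-\inf_{u\in U,\ \|u\|=1}\|P_Vu\|^{2},
$$
valid because $P_U-P_UP_VP_U$ is a positive operator; it shows that $\|(I-P_V)P_U\|<1$ is the same as $T^{*}=P_V|_U$ being bounded below, and symmetrically that $\|(I-P_U)P_V\|<1$ is the same as $T=P_U|_V$ being bounded below. Granting this, (S4)$\Rightarrow$(S3) follows because $T$ bounded below makes $T$ injective (so $V\cap U^{\perp}=\ker T=\{0\}$) with closed range, while $T^{*}$ bounded below makes $T^{*}$ injective, so the range of $T$ is dense in $U$ (its orthogonal complement in $U$ being $\ker T^{*}$); a closed dense subspace of $U$ equals $U$, whence $P_UV=U$. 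Conversely, (S3) says $T$ is onto ($P_UV=U$) and one-to-one ($\ker T=V\cap U^{\perp}=\{0\}$), so by the open mapping theorem $T$ is boundedly invertible; then $T$ and $T^{*}$ are both bounded below, which is (S4).

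The step I expect to be the main obstacle — really the only non-routine point — is the operator-norm identity above together with the resulting dictionary ``$\|(I-P_W)P_X\|<1\Leftrightarrow P_W|_X$ bounded below'', which is where the relative geometry of the two subspaces genuinely enters. One must also dispose of the degenerate cases $U=\{0\}$ or $V=\{0\}$, where the infimum is vacuous and every clause holds trivially, and take care that the open mapping (bounded inverse) theorem is applied to a map between complete spaces, which is guaranteed precisely because $U$ and $V$ are closed.
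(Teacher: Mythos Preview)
Your argument is correct. The two equivalences are handled cleanly: for (S1)$\Leftrightarrow$(S2) the computation $(P_UV)^{\perp}\cap U=U\cap V^{\perp}$ via $\langle u,P_Uv\rangle=\langle u,v\rangle$ is the right idea, and for (S3)$\Leftrightarrow$(S4) the identity $\|(I-P_V)P_U\|^{2}=1-\inf_{u\in U,\,\|u\|=1}\|P_Vu\|^{2}$ together with the bounded-inverse theorem does the job. The adjoint relation $T^{*}=P_V|_U$ and the identification $\ker T=V\cap U^{\perp}$, $\ker T^{*}=U\cap V^{\perp}$ are exactly those noted in the paper just before the lemma, so your dictionary is consistent with the surrounding text.

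As for comparison with the paper's proof: there is none to compare with. The paper does not prove this lemma; it simply quotes it from Nikolski's \emph{Treatise on the shift operator} \cite[p.~201]{Nik}. Your write-up therefore supplies what the paper outsources. One small expository point: you justify closedness of $U$ and $V$ by saying ``since $P_U$ and $P_V$ are orthogonal projections''; strictly speaking the logic runs the other way (the lemma's title and its use of $P_U,P_V$ presuppose closed subspaces), but this does not affect the mathematics.
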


\begin{lemma}
\label{lemma_fred_1}
Let $p_1,p_2\in \mS(\calA_+)$. Then $d_g(p_1,p_2)<1$ if and only if
the following three conditions hold:
\begin{enumerate}
\item $P_{\calG_1}|_{\calG_2}$ is Fredholm, 
\item $\calG_1 \cap \calG_2^\perp=\{0\}$, and 
\item $\calG_2 \cap \calG_1^\perp=\{0\}$.
\end{enumerate}
\end{lemma}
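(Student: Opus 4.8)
\emph{Proof plan.} The plan is to reduce the statement entirely to the ``Lemma on Closed Subspaces'' (Lemma~\ref{lemma_cl_sub}), applied to the closed subspaces $U=\calG_1$ and $V=\calG_2$ of $H=H^2(\mC^2)$, combined with the elementary Fredholm bookkeeping for the restricted projection $P_{\calG_1}|_{\calG_2}$ that was assembled just before the statement of the lemma.

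First I would observe that condition (S4) of Lemma~\ref{lemma_cl_sub}, taken with $U=\calG_1$ and $V=\calG_2$, reads $\|(I-P_{\calG_2})P_{\calG_1}\|<1$ and $\|(I-P_{\calG_1})P_{\calG_2}\|<1$, i.e.\ $\vec{\delta}(p_1,p_2)<1$ and $\vec{\delta}(p_2,p_1)<1$; by the identity $d_g(p_1,p_2)=\max\{\vec{\delta}(p_1,p_2),\vec{\delta}(p_2,p_1)\}$ this is precisely the condition $d_g(p_1,p_2)<1$. Hence the equivalence (S3)$\Leftrightarrow$(S4) shows that $d_g(p_1,p_2)<1$ holds if and only if
$$
P_{\calG_1}\calG_2=\calG_1 \qquad\text{and}\qquad \calG_2\cap\calG_1^\perp=\{0\}.
$$
Because $d_g$ is symmetric ($d_g(p_1,p_2)=\|P_{\calG_1}-P_{\calG_2}\|=\|P_{\calG_2}-P_{\calG_1}\|=d_g(p_2,p_1)$), I would then run the same argument with the roles of $\calG_1$ and $\calG_2$ interchanged, so that $d_g(p_1,p_2)<1$ is equally equivalent to $P_{\calG_2}\calG_1=\calG_2$ and $\calG_1\cap\calG_2^\perp=\{0\}$.

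For the implication (1)--(3) $\Rightarrow d_g(p_1,p_2)<1$, I would use the computations preceding the statement: $\ker(P_{\calG_1}|_{\calG_2})=\calG_2\cap\calG_1^\perp$, which is $\{0\}$ by (3), so $P_{\calG_1}|_{\calG_2}$ is injective, while $\ker((P_{\calG_1}|_{\calG_2})^*)=\calG_1\cap\calG_2^\perp$, which is $\{0\}$ by (2). Since by (1) the operator is Fredholm, its index is $\dim(\calG_2\cap\calG_1^\perp)-\dim(\calG_1\cap\calG_2^\perp)=0$ and its cokernel is trivial, so its (closed) range is all of $\calG_1$; thus $P_{\calG_1}\calG_2=\calG_1$, which together with (3) is exactly (S3) for $U=\calG_1$, $V=\calG_2$, and Lemma~\ref{lemma_cl_sub} then yields (S4), i.e.\ $d_g(p_1,p_2)<1$. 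For the converse, assuming $d_g(p_1,p_2)<1$, I would invoke (S3)$\Leftrightarrow$(S4) in both orderings (legitimate since $d_g(p_1,p_2)=d_g(p_2,p_1)$), obtaining $\calG_2\cap\calG_1^\perp=\{0\}$ and $\calG_1\cap\calG_2^\perp=\{0\}$, which are (3) and (2), as well as $P_{\calG_1}\calG_2=\calG_1$; then $P_{\calG_1}|_{\calG_2}$ has closed range $\calG_1$, trivial kernel, and trivial cokernel, hence is Fredholm, which is (1).

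The argument is essentially routine once Lemma~\ref{lemma_cl_sub} is available; the only points needing a little care are the identifications of $\ker(P_{\calG_1}|_{\calG_2})$ and of $\ker((P_{\calG_1}|_{\calG_2})^*)$ with $\calG_2\cap\calG_1^\perp$ and $\calG_1\cap\calG_2^\perp$ respectively (carried out via the adjoint relation $(P_{\calG_1}|_{\calG_2})^*=P_{\calG_2}|_{\calG_1}$ already recorded above), and the standard fact that a Fredholm operator of index zero with trivial kernel is boundedly invertible — this is precisely what upgrades (1)--(3) into the surjectivity half of (S3). I do not expect any genuine obstacle here: the reasoning is pure Hilbert-space geometry and involves no hard analysis.
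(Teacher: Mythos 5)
Your proof is correct and takes essentially the same route as the paper: both directions reduce to the equivalence (S3)$\Leftrightarrow$(S4) of Lemma~\ref{lemma_cl_sub} (translated via $d_g=\max\{\vec{\delta}(p_1,p_2),\vec{\delta}(p_2,p_1)\}$) together with the kernel and adjoint-kernel identifications $\ker(P_{\calG_1}|_{\calG_2})=\calG_2\cap\calG_1^\perp$ and $\ker((P_{\calG_1}|_{\calG_2})^*)=\calG_1\cap\calG_2^\perp$ recorded before the lemma. The only cosmetic difference is that in the direction (1)--(3) $\Rightarrow d_g<1$ the paper deduces $P_{\calG_1}\calG_2=\calG_1$ from condition (2) via (S1)$\Leftrightarrow$(S2) plus closedness of the range, whereas you use triviality of the adjoint kernel together with closed range; these are interchangeable one-line arguments.
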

\begin{proof} (Only if) As $P_{\calG_1}|\calG_2$ is Fredholm, its
  range is closed, that is, $P_{\calG_1}\calG_2$ is a closed subspace.
  Hence from the equivalence of (S1) with (S2) in
  Lemma~\ref{lemma_cl_sub} above, we have that the closure of
  $P_{\calG_1} \calG_2$, which is the same as $P_{\calG_1} \calG_2$,
  is equal to $\calG_1$. Now from the equivalence of (S3) with (S4) in
  Lemma~\ref{lemma_cl_sub}, we obtain that
  $\vec{\delta}(p_1,p_2)=\|(I-P_{\calG_2})P_{\calG_1}\|<1$ and
  $\vec{\delta}(p_2,p_1)=\|(I-P_{\calG_1})P_{\calG_2}\|<1$. Hence
  $d_g(p_1,p_2)<1$.

\medskip

\noindent (If) As $\vec{\delta}(p_1,p_2)=\|(I-P_{\calG_2})
P_{\calG_1}\|<1$ and
$\vec{\delta}(p_2,p_1)=\|(I-P_{\calG_1})P_{\calG_2}\|<1$, by the
equivalence of (S3) with (S4) in Lemma~\ref{lemma_cl_sub}, we obtain
 $P_{\calG_1} \calG_2=\calG_1$, and so the range of
$P_{\calG_1}|_{\calG_2}$ is closed. Moreover, $\calG_2 \cap
\calG_1^\perp=\{0\}$. By interchanging the roles of $p_1$ and $p_2$,
we also get that $\calG_1 \cap \calG_2^\perp=\{0\}$.
\end{proof}

The following is easy to check.

\begin{lemma}
\label{lemma_li}
Let $H_1, H_2$ be Hilbert spaces and $T\in \calL(H_1,H_2)$, $S\in
\calL(H_2,H_1)$ be such that $ST=I$. Suppose that $U$ is a subspace of
$H_1$. Then we have that $TU$ is closed if and only if $U$ is closed.
\end{lemma}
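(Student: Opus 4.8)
The plan is to exploit the single hypothesis $ST=I$, which already forces $T$ to be far better behaved than a generic bounded operator. First I would record the elementary consequences: for every $x\in H_1$ we have $\|x\|=\|STx\|\le\|S\|\,\|Tx\|$, so that
$$
\tfrac{1}{\|S\|}\,\|x\|\le\|Tx\|\le\|T\|\,\|x\|\qquad(x\in H_1).
$$
In particular $T$ is injective and bounded below, which is exactly the statement that $T$ is a linear homeomorphism of $H_1$ onto its range $TH_1$ with inverse $S|_{TH_1}$; moreover $TH_1$ is itself a closed subspace of $H_2$. (Here ``subspace'' is understood in the linear-manifold sense, as is clear from the statement.)

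For the implication ``$U$ closed $\Rightarrow$ $TU$ closed'' I would argue by sequences. Let $(u_n)$ lie in $U$ with $Tu_n\to y$ in $H_2$. Then $(Tu_n)$ is Cauchy, and the lower bound above gives $\|u_n-u_m\|\le\|S\|\,\|Tu_n-Tu_m\|$, so $(u_n)$ is Cauchy, hence $u_n\to u$ for some $u\in H_1$. Since $U$ is closed, $u\in U$, and continuity of $T$ gives $y=Tu\in TU$. Alternatively, and perhaps more cleanly, one can observe that $TU=TH_1\cap S^{-1}(U)$: the inclusion $\subseteq$ is immediate since $STu=u$, and for $\supseteq$, if $y=Tx\in TH_1$ with $Sy\in U$ then $x=STx=Sy\in U$, so $y\in TU$. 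Thus $TU$ is the intersection of the closed set $TH_1$ with the closed set $S^{-1}(U)$ (a preimage of a closed set under a continuous map), hence closed.

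For the converse ``$TU$ closed $\Rightarrow$ $U$ closed'' the bounded-below estimate is not even needed: if $(u_n)$ in $U$ converges to $u\in H_1$, then $Tu_n\to Tu$ by continuity of $T$, and $Tu\in TU$ because $TU$ is closed, so $Tu=Tv$ for some $v\in U$; injectivity of $T$ then yields $u=v\in U$. There is no real obstacle here; the only point worth flagging is that one should resist proving the first implication via the bare identity $U=S(TU)$, since a bounded operator need not send closed sets to closed sets — it is the bounded-belowness of $T$, not merely the continuity of $S$, that does the work.
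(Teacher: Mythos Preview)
Your proof is correct and follows essentially the same route as the paper: the paper also derives the lower bound $\|x\|\le\|S\|\,\|Tx\|$ from $ST=I$ and then runs the identical sequential arguments in each direction (using bounded-belowness for ``$U$ closed $\Rightarrow TU$ closed'' and applying $S$ to recover $u=v$ for the converse). Your alternative description $TU=TH_1\cap S^{-1}(U)$ is a pleasant extra that the paper does not mention, but the core argument is the same.
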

\begin{proof} (If) Since $T$ is left invertible, $ \|x\|=\|STx\|\leq
  \|S\|\|Tx\|$ ($x\in H_1$).  Suppose $(y_n)=(Tx_n)$ ($x_n\in U$) is a
  sequence that converges in $H_2$. Thus $ \|y_n-y_m\|\geq
  \frac{1}{\|S\|} \|x_n-x_m\|$, showing that $(x_n)$ must converge to
  some $x\in H_1$. As $U$ is closed, $x\in U$. Thus
  $y_n=Tx_n\rightarrow Tx\in TU$. Hence $TU $ is closed.

\medskip 

\noindent (Only if) Now suppose that $TU$ is closed. If $(x_n)$ is a
sequence in $U$ that converges to $x$ in $H_1$, then clearly $T
x_n\rightarrow Tx$. But $TU$ is closed, and so $Tx\in TU$. Hence
$Tx=Tx'$ for some $x'\in U$. Operating by $S$, we have
$x=STx=STx'=x'$, and so $x=x'\in U$. Thus $U$ is closed. 
\end{proof}

For $X\in (L^{\infty})^{p\times m}$, $T_X$ denotes the {\em Toeplitz
  operator} from $(H^2)^m$ to $(H^2)^p$, given by $ T_X
\varphi=\Pi_{(H^2)^p}(X\varphi)$ ($\varphi \in (H^2)^m$), where
$X\varphi$ is considered as an element of $(L^2)^p$ and
$\Pi_{(H^2)^p}$ denotes the canonical orthogonal projection from
$(L^2)^p$ onto $(H^2)^p$. \label{pageref_T_X}

\begin{lemma}
\label{lemma_fred_2}
  Let $p_1,p_2\in \mS(\calA_+)$. Then $P_{\calG_1}|_{\calG_2}$ is
  Fredholm if and only if $T_{G_1^* G_2}$ is Fredholm. Moreover, their
  Fredholm indices coincide.
\end{lemma}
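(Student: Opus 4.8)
The plan is to realize $T_{G_1^*G_2}$ as a conjugate of $P_{\calG_1}|_{\calG_2}$ by isometric isomorphisms; once this is done, the equivalence of Fredholmness and the equality of the indices are immediate, since pre- and post-composition with invertible bounded operators preserves both.

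The first step is to introduce, for $i=1,2$, the multiplication operator $M_{G_i}\colon H^2\to H^2(\mC^2)$ defined by $M_{G_i}\varphi=G_i\varphi$. As the entries of $G_i$ lie in $\calA_+\subset H^\infty$, this is a well-defined bounded operator; and since $p_i=n_id_i^{-1}$ is a \emph{normalized} coprime factorization we have $G_i^*G_i=n_i^*n_i+d_i^*d_i=1$ pointwise on $i\mR$, so $\|M_{G_i}\varphi\|_{L^2}=\|\varphi\|_{L^2}$ and $M_{G_i}$ is an isometry. By definition its range is $\calG_i=G_iH^2$, so $M_{G_i}$ is an isometric isomorphism from $H^2$ onto the closed subspace $\calG_i$, with inverse $M_{G_i}^*|_{\calG_i}$.

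Next I would identify the relevant adjoints and projections. A direct computation of inner products shows that $M_{G_i}^*\colon H^2(\mC^2)\to H^2$ is the Toeplitz operator $T_{G_i^*}$; consequently $P_{\calG_i}=M_{G_i}M_{G_i}^*$ and $M_{G_i}^*M_{G_i}=I$. Using associativity of the pointwise products in $L^2$ one also checks that $T_{G_1^*}M_{G_2}=T_{G_1^*G_2}$. Combining these facts, for $\varphi\in H^2$ one gets
$$\big(M_{G_1}^*|_{\calG_1}\big)\,\big(P_{\calG_1}|_{\calG_2}\big)\,M_{G_2}\,\varphi = M_{G_1}^*\big(P_{\calG_1}(G_2\varphi)\big) = M_{G_1}^*M_{G_1}M_{G_1}^*(G_2\varphi) = T_{G_1^*}(G_2\varphi) = T_{G_1^*G_2}\varphi,$$
so that $T_{G_1^*G_2}=\big(M_{G_1}^*|_{\calG_1}\big)\,\big(P_{\calG_1}|_{\calG_2}\big)\,M_{G_2}$, an equality of operators $H^2\to H^2$ in which $M_{G_2}\colon H^2\to\calG_2$ and $M_{G_1}^*|_{\calG_1}\colon\calG_1\to H^2$ are isometric isomorphisms. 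Hence $P_{\calG_1}|_{\calG_2}$ is Fredholm if and only if $T_{G_1^*G_2}$ is, and their Fredholm indices agree; this is consistent with the formula $\dim(\calG_2\cap\calG_1^\perp)-\dim(\calG_1\cap\calG_2^\perp)$ recorded above for the index of $P_{\calG_1}|_{\calG_2}$.

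There is no real obstacle in this argument; the only points demanding attention are bookkeeping ones — keeping track of which Hilbert spaces each operator acts between, so that $M_{G_1}^*$ genuinely serves as the inverse of $M_{G_1}$ on $\calG_1$ — together with the routine verifications that $M_{G_i}$ is a well-defined isometry with range $\calG_i$ and that $T_{G_1^*}M_{G_2}=T_{G_1^*G_2}$.
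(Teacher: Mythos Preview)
Your argument is correct. The core idea is the same as the paper's: use that $T_{G_i}$ (your $M_{G_i}$) is an isometry of $H^2$ onto $\calG_i$, and relate $P_{\calG_1}|_{\calG_2}$ to $T_{G_1^*G_2}$ through these isometries. Where you differ is in the packaging. You write down the single identity
\[
T_{G_1^*G_2}=\bigl(M_{G_1}^*|_{\calG_1}\bigr)\circ\bigl(P_{\calG_1}|_{\calG_2}\bigr)\circ M_{G_2}
\]
between operators $H^2\to H^2$, with the outer factors isometric isomorphisms, and then invoke once the standard fact that conjugation by invertibles preserves Fredholmness and index. The paper instead unbundles this: it constructs an explicit bijection $\iota:\ker(T_{G_1^*}T_{G_2})\to\calG_1^\perp\cap\calG_2$ (which is just your $M_{G_2}$ restricted to the kernel), argues the analogous statement for cokernels, and separately shows that $\operatorname{ran}(P_{\calG_1}|_{\calG_2})=T_{G_1}\operatorname{ran}(T_{G_1^*G_2})$ so that closedness of one range is equivalent to closedness of the other via Lemma~\ref{lemma_li}. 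Your route is shorter and makes the structure more transparent; the paper's route is more explicit about which pieces of data (kernel, cokernel, closed range) match up, and along the way invokes the Bezout identity $[x_i\;\,y_i]G_i=1$ rather than only the normalization $G_i^*G_i=1$, though in the end both suffice.
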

\begin{proof} First of all, we note that $T_{G_1^*
    G_2}=T_{G_1^*}T_{G_2}$ (since $G_2$ has $H^\infty$ entries). Also,
  it can be checked that for a matrix $X$ with $L^\infty$ entries $
  (T_X)^*=T_{X^*}$.  Thus $(T_{G_1^* G_2})^*= T_{G_2^* G_1}$.

As $T_{G_1}$ is an isometry, it follows that the orthogonal projection
onto the range of $T_{G_1}$, namely the subspace $\calG_1$, is given
by $T_{G_1} (T_{G_1})^*= T_{G_1} T_{G_1^*}$. Indeed, with $P:=
T_{G_1} T_{G_1^*}$, and using $G_1^* G_1=1$, we can check that
$P^2=P$, that $P^*=P$ and that $P$ maps onto the range of $T_{G_1}$:
$$
 \textrm{ran } (T_{G_1} T_{G_1^*} )\subset  \textrm{ran } T_{G_1}
=\textrm{ran } (T_{G_1}T_{G_1^*} T_{G_1})\subset \textrm{ran } (T_{G_1}T_{G_1^*}).
$$
We have that 
\begin{eqnarray*}
\ker (T_{G_1^*}T_{G_2})&=& \{\varphi \in H^2 : T_{G_1^*}T_{G_2} \varphi=0\}
\\
&=&  \{\varphi \in H^2 : T_{G_1}T_{G_1^*}T_{G_2} \varphi=0\} \quad 
\textrm{(since }\left[\begin{array}{cc} x_1 & y_1  \end{array}\right] G_1=1\textrm{)}
\\
&=& \{\varphi \in H^2 : P_{\calG_1} T_{G_2} \varphi=0\}
=  \{\varphi \in H^2 : T_{G_2}\varphi \in \calG_1^{\perp}\}.
\end{eqnarray*}
Consider the map $\iota:\ker (T_{G_1^*}T_{G_2})\rightarrow
\calG_1^\perp \cap \calG_2$ defined by $\iota
(\varphi)=T_{G_2}\varphi$ for $\varphi \in \ker (T_{G_1^*}T_{G_2})$.
From the above calculation, we see that $ \iota$ is onto. Also, since
$ \left[\begin{array}{cc} x_2 & y_2 \end{array}\right] G_2=1$ it
follows that $\iota$ is one-to-one. So $\iota$ is invertible.

The above shows that in case that $P_{\calG_1}|_{\calG_2}$ and
$T_{G_1^* G_2}$ are both Fredholm operators, their Fredholm indices
will coincide.

In light of the above, we just need to show that the range of $
P_{\calG_1}|_{\calG_2}$ is closed if and only if the range of $T_{G_1^*
  G_2}$ is closed.  The range of $P_{\calG_1}|_{\calG_2}$ is
$$
P_{\calG_1} \calG_2=P_{\calG_1} \textrm{ran }T_{G_2}=T_{G_1}T_{G_1^*} \textrm{ran }T_{G_2}
= T_{G_1}\textrm{ran }T_{G_1^* G_2}.
$$
Since $G_1$ has a left inverse $\left[\begin{array}{cc} x_1 &
    y_1\end{array}\right]\in \calA_+^2$, it follows that $T_{G_1}$ is
left-invertible. By Lemma~\ref{lemma_li}, the range of
$P_{\calG_1}|_{\calG_2}$ is closed if and only if the range of
$\textrm{ran }T_{G_1^* G_2}$ is closed.
\end{proof}

We will need the following result, which follows from \cite[Thm.~3,
p.150]{Dou}. Here $C_0$ denotes the set of continuous functions on
$\mR$ that vanish at $\pm\infty$. \label{pageref_C_0}

\begin{proposition}
\label{prop_douglas}
Let $F=f+g$, where $f\in AP$ and $g\in C_0$ be such that $T_F$ is Fredholm. 
Then the following hold:
\begin{enumerate}
\item $T_f$ is invertible.
\item $F\in \textrm{\em inv } (AP+C_0)$.
\item The Fredholm index of $T_F$ is the winding number of $1+f^{-1}g$. 
\end{enumerate}
\end{proposition}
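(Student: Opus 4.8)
The plan is to read the three assertions off the cited Fredholm criterion of Douglas for Toeplitz operators whose symbol splits as an almost periodic function plus a continuous function vanishing at infinity, and to supply the soft arguments that connect the statement of that theorem to the precise form (1)--(3) needed here. First I would record the ambient structure: $AP+C_0$ is a closed unital subalgebra of $L^\infty(\mR)$ in which $C_0$ is a closed ideal, and the quotient map $f+g\mapsto f$ onto $AP$ records the behaviour at infinity. I would also use the standard fact that a Fredholm Toeplitz operator has symbol invertible in $L^\infty$ (equivalently, the essential range of the symbol misses $0$): thus $T_F$ Fredholm makes $F$ invertible in $L^\infty(\mR)$, and since $F=f+g$ is continuous this forces $F(iy)\neq 0$ for every $y\in\mR$ with $\inf_{\mR}|F|>0$.

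Next I would apply \cite[Thm.~3, p.150]{Dou} to $T_F$. In the present setting its content is that Fredholmness of $T_F$ forces the almost periodic part $f$ to be invertible in $AP$ with vanishing mean motion $w(f)=0$. Granting this: from $f\in\inv AP$ we get $f^{-1}\in AP$, so $h:=1+f^{-1}g$ lies in $\mC\cdot 1+C_0=C(\dot{\mR})$ with $h(\pm\infty)=1$; combining $F=fh$ with the invertibility of $F$ in $L^\infty$ shows $h$ is nonvanishing, hence $h\in\inv C(\dot{\mR})$ and therefore $F=fh\in\inv(AP+C_0)$, which is (2); moreover a nonvanishing element of $C(\dot{\mR})$ has a well-defined integer winding number, so ${\tt w}(1+f^{-1}g)$ in (3) is meaningful. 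Assertion (1) is then the classical fact that for $f\in\inv AP$ the Toeplitz operator $T_f$ is invertible as soon as $w(f)=0$ (for such $f$, $T_f$ is Fredholm precisely when the mean motion vanishes, and then it is in fact two-sided invertible).

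For the index formula (3) I would avoid factoring $T_F$ through the factorisation $F=fh$ of its symbol (the factors lie in neither $H^\infty(\mC_+)$ nor its conjugate) and instead use the standard Hankel identity, which expresses $T_{fh}$ as $T_fT_h$ plus the composition of a bounded operator with the Hankel operator $H_h$ of symbol $h$. Since Hankel operators annihilate analytic symbols, $H_h=H_{f^{-1}g}$ has symbol in $C_0\subset H^\infty(\mC_+)+C(\dot{\mR})$ and so is compact by Hartman's theorem; hence $T_F\equiv T_fT_h$ modulo compacts. Multiplicativity of the Fredholm index, together with (1) and the classical index theorem for Toeplitz operators with nonvanishing symbol in $C(\dot{\mR})$ (which on $H^2(\mC_+)$ reads $\ind T_h={\tt w}(h)$ for the orientation of $\mR$ from $-\infty$ to $+\infty$), then gives
\[
\ind T_F=\ind T_f+\ind T_h=0+{\tt w}(h)={\tt w}(1+f^{-1}g).
\]
(Alternatively, the index statement can be taken as part of the cited theorem of Douglas.)

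The step I expect to be the genuine obstacle, and the one that truly needs the cited theorem rather than a soft argument, is extracting $f\in\inv AP$ \emph{together with} $w(f)=0$ from the bare Fredholmness of $T_F$: a Toeplitz operator on $H^2(\mC_+)$ with symbol in $C_0$ is never compact unless its symbol vanishes identically (its essential spectrum is the closure of the range of the symbol, which is nondegenerate otherwise), so the almost periodic part cannot be isolated modulo compacts and one is forced into the local Fredholm analysis of $T_F$ at the point at infinity. A secondary, purely clerical, difficulty is reconciling the orientation and sign conventions between \cite{Dou}, the winding number ${\tt w}$, and the Fredholm index as used here.
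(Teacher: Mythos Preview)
Your proposal is correct and follows the same overall strategy as the paper: both treat \cite[Thm.~3]{Dou} as the essential external input and read off (1)--(3) from it. Two minor differences are worth recording. First, the paper does not black-box the invertibility of $f$ in $AP$: it argues directly that Fredholmness of $T_F$ (via the symbol map on the Toeplitz $C^*$-algebra $\calT(AP+C_0)$) gives $|F|>\epsilon$ on $\mR$, whence $|f|>\epsilon/2$ for large $|x|$ because $g\in C_0$, and then almost periodicity forces $f(x)\neq 0$ for \emph{all} $x$; only the vanishing of $w(f)$ is then extracted from Douglas. Second, for (3) the paper simply invokes the additivity of Douglas's generalized index, so that the index of $T_F$ equals that of $T_{1+f^{-1}g}$ (the contribution of $T_f$ being $(0,0)$), rather than your Hankel-compactness route $T_F\equiv T_fT_h$ modulo compacts. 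Your alternative is legitimate and arguably more transparent, and you correctly flag the orientation/sign bookkeeping as the only real hazard; the paper's route trades that hazard for a heavier dependence on the internal structure of Douglas's generalized index.
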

\begin{proof} Since $T_F$ is invertible modulo the compacts, it is
  invertible modulo any bigger ideal which we can take to be the
  kernel of the symbol map from the Toeplitz $C^*$-algebra
  $\calT(AP+C_0)$ (generated by $T_\varphi$ for
  $\varphi \in AP+C_0$) to $AP+C_0$.  Consequently, there must exist
  $\epsilon > 0$ such that $|f + g| > \epsilon$ on all of $\mR$.

  Since $g$ is in $C_0$, it follows that by choosing
  $a$ large enough we can assume that $|g(x)| < \epsilon/2$ for $x >
  a$ and hence $|f(x)| > \epsilon/2$ for $x > a$.  Since $f\in AP$, it
  follows that $f(x)\neq 0$ for all $x\in \mR$. Therefore $f$ is
  invertible in $AP$.  Moreover, using \cite[Theorem~3, p.150]{Dou},
  one knows that its generalized index is $(0,n)$ for some integer $n$
  and hence the average winding number of $f$ is zero.  Thus $T_f$ is
  invertible \cite[Theorem~11, p.25]{DouA}.

  Again using \cite[Theorem~3, p.150]{Dou}, one can see that the
  generalized index of $T_F$ equals the sum of the generalized indices
  of $T_f$ and $T_{1 + f^{-1} g}$.  But the generalized index of $T_f$
  is $(0,0)$ which completes the proof.
\end{proof}

\begin{proposition}
\label{prop_alt_exp_gap_metric}
If $p_1,p_2\in \mS(\calA_+)$, then 
$$
d_g(p_1,p_2)=\inf_{q\in \textrm{\em inv } \calA_+} \|G_1-G_2 q\|_\infty
$$
\end{proposition}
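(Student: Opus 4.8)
The plan is to sandwich $d_g(p_1,p_2)$ between two quantities and show that both infima — one over $\inv\calA_+$, one over $H^\infty$ — coincide with it. By Georgiou's formula (recalled just before Lemma~\ref{lemma_Kalle}) together with Lemma~\ref{lemma_Kalle}, we already know that
$$
d_g(p_1,p_2)=\max\Big\{\inf_{q\in\calA_+}\|G_1-G_2q\|_\infty,\;\inf_{q\in\calA_+}\|G_2-G_1q\|_\infty\Big\}.
$$
Since $\inv\calA_+\subset\calA_+$, the inequality $d_g(p_1,p_2)\leq\inf_{q\in\inv\calA_+}\|G_1-G_2q\|_\infty$ would be immediate if we knew the first term in the max dominates; in general one direction still needs an argument. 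For the reverse inequality, I would argue that the infimum over $\calA_+$ is already attained (or approached) by \emph{invertible} elements of $\calA_+$, at least when $d_g(p_1,p_2)<1$, so that enlarging the feasible set from $\inv\calA_+$ to all of $\calA_+$ does not decrease it.

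First I would dispose of the degenerate case $d_g(p_1,p_2)=1$: here $\inf_{q\in\inv\calA_+}\|G_1-G_2q\|_\infty\le 1$ is seen by testing $q_n=\frac1n$ (as in the proof of Proposition~\ref{prop_alt_exp_nu_metric}), giving $\|G_1-G_2q_n\|_\infty\le 1+\frac1n$, while the reverse bound $\ge 1$ follows because any $q$ with $\|G_1-G_2q\|_\infty<1$ would force, via $\|1-G_1^*G_2q\|_\infty<1$ and the homotopy/index argument exactly as in Proposition~\ref{prop_alt_exp_nu_metric}, that $G_1^*G_2\in\inv\calA$ with $W(G_1^*G_2)=(0,0)$, hence (by Proposition~\ref{prop_alt_exp_nu_metric} and $d_\nu\ge d_g$ or a direct check) $d_g(p_1,p_2)<1$, a contradiction. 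So assume now $d_g(p_1,p_2)<1$. By Lemma~\ref{lemma_fred_1}, $P_{\calG_1}|_{\calG_2}$ is Fredholm with trivial kernel and cokernel, hence invertible; by Lemma~\ref{lemma_fred_2}, $T_{G_1^*G_2}$ is then Fredholm of index $0$. Writing $G_1^*G_2=\widehat{h_a}+H_{AP}$ with $\widehat{h_a}$ a Fourier transform of an $L^1$ function and $H_{AP}\in AP$ (each entry lies in $\calA$, which sits inside $AP+C_0$ since $\widehat{L^1(\mR)}\subset C_0$), Proposition~\ref{prop_douglas} gives that $G_1^*G_2\in\inv(AP+C_0)$, that the relevant $AP$-Toeplitz part is invertible, and that the winding number ${\tt w}(1+H_{AP}^{-1}\widehat{h_a})$ equals the Fredholm index $0$; combined with $w(H_{AP})=0$ (the average winding number of the $AP$ part, which is forced to vanish as in the proof of Proposition~\ref{prop_douglas}), this yields $G_1^*G_2\in\inv\calA$ with $W(G_1^*G_2)=(0,0)$.

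Now I invoke Proposition~\ref{prop_alt_exp_nu_metric}: under exactly these conditions the candidate $q_0:=G_2^*G_1\in\inv\calA$ with $W(q_0)=(0,0)$ achieves $\|G_1-G_2q_0\|_\infty=\|\widetilde G_2G_1\|_\infty=d_\nu(p_1,p_2)$. The remaining task — and this is the crux — is to push this optimal $q_0$, or a slight perturbation of it, from $\inv\calA$ into $\inv\calA_+$ while keeping $\|G_1-G_2q\|_\infty$ close to $d_g(p_1,p_2)$. For this I would again use Lemma~\ref{lemma_Kalle} (or directly the Mikkola machinery cited there): given $\varepsilon>0$ pick $q\in\calA_+$ with $\|G_1-G_2q\|_\infty<d_g(p_1,p_2)+\varepsilon<1$; the same index homotopy as above (using $\|1-G_1^*G_2q\|_\infty<1$) shows $W(q)=(0,0)$ once we know $q\in\inv\calA$; and $q\in\inv\calA$ follows because $G_1^*G_2q$ is invertible in $\calA$ (being $1$ minus something of norm $<1$) and $G_1^*G_2\in\inv\calA$. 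Then $W(q)=(0,0)$ and $q\in\calA_+\cap\inv\calA$ give, by property (I4), that $q\in\inv\calA_+$. Hence $\inf_{q\in\inv\calA_+}\|G_1-G_2q\|_\infty\le d_g(p_1,p_2)+\varepsilon$ for every $\varepsilon>0$, i.e. $\le d_g(p_1,p_2)$; the opposite inequality is trivial from $\inv\calA_+\subset\calA_+$ and Georgiou's formula/Lemma~\ref{lemma_Kalle}. Combining the two cases finishes the proof.

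The step I expect to be the main obstacle is the last one: verifying that an almost-optimal $q\in\calA_+$ is automatically invertible \emph{in $\calA_+$} and has trivial index. The index-triviality is clean via the homotopy argument, but one must be careful that $\|G_1-G_2q\|_\infty<1$ genuinely forces $q\in\inv\calA$ (this uses $G_1^*G_2\in\inv\calA$, established via Proposition~\ref{prop_douglas}, so the Fredholm analysis of the gap is doing real work here and cannot be bypassed), and then that property (I4) applies to upgrade invertibility from $\calA$ to $\calA_+$. A subtlety worth flagging is that, a priori, $d_g$ is a max of two infima; I am implicitly using the $d_g<1$ symmetry $\vec\delta(p_1,p_2)=\vec\delta(p_2,p_1)$ (cited from \cite{GeoSmi}) so that the single infimum $\inf_{q\in\calA_+}\|G_1-G_2q\|_\infty$ already equals $d_g(p_1,p_2)$ rather than just one of the two directed gaps.
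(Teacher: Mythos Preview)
Your treatment of the case $d_g(p_1,p_2)<1$ is correct and essentially identical to the paper's Case~$\underline{1}^\circ$: you use Lemmas~\ref{lemma_fred_1} and~\ref{lemma_fred_2} together with Proposition~\ref{prop_douglas} to obtain $G_1^*G_2\in\inv\calA$ with $W(G_1^*G_2)=(0,0)$, and then the homotopy/index argument plus (I4) upgrades any near-optimal $q\in\calA_+$ to an element of $\inv\calA_+$.

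The gap is in your handling of $d_g(p_1,p_2)=1$. You argue that if some $q\in\inv\calA_+$ satisfied $\|G_1-G_2q\|_\infty<1$, then $G_1^*G_2\in\inv\calA$ with $W(G_1^*G_2)=(0,0)$, and you conclude $d_g(p_1,p_2)<1$ ``by Proposition~\ref{prop_alt_exp_nu_metric} and $d_\nu\ge d_g$''. But the inequality $d_\nu\ge d_g$ is false: the paper establishes $d_\nu\le d_g$, and does so precisely \emph{via} the proposition you are trying to prove, so appealing to any comparison between $d_\nu$ and $d_g$ here would in any case be circular. Your fallback ``or a direct check'' is left unspecified. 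The difficulty is real: when $d_g=1$ but $\vec\delta(p_1,p_2)<1$, there \emph{do} exist $q\in\calA_+$ with $\|G_1-G_2q\|_\infty<1$, and one has to rule out that any of them lies in $\inv\calA_+$. This is exactly the content of the paper's Case~$\underline{2}^\circ$, which runs the Fredholm lemmas in the contrapositive direction: from $d_g=1$ and $\vec\delta(p_1,p_2)<1$ one deduces (via Lemmas~\ref{lemma_fred_1},~\ref{lemma_fred_2} and Proposition~\ref{prop_douglas}) that either $G_1^*G_2\notin\inv\calA$ or $W(G_1^*G_2)\neq(0,0)$, hence $d_\nu=1$, and then Proposition~\ref{prop_alt_exp_nu_metric} together with $\inv\calA_+\subset\{q\in\inv\calA:W(q)=(0,0)\}$ gives $\inf_{q\in\inv\calA_+}\|G_1-G_2q\|_\infty\ge 1$.

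Your contradiction scheme can be repaired, but not by comparing $d_\nu$ and $d_g$: from $G_1^*G_2\in\inv\calA$ with $W(G_1^*G_2)=(0,0)$ one can cite \cite[Proposition~6.3]{DouA} to get that $T_{G_1^*G_2}$ is invertible, then Lemma~\ref{lemma_fred_2} makes $P_{\calG_1}|_{\calG_2}$ invertible (trivial kernel and cokernel), and Lemma~\ref{lemma_fred_1} then yields $d_g(p_1,p_2)<1$. In other words, the ``degenerate'' case $d_g=1$ is not disposable without the Toeplitz/Fredholm analysis; it requires the same machinery as the main case, just used in reverse.
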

\begin{proof} $\underline{1}^\circ$ Consider first the case when
  $d_g(p_1,p_2)<1$. From Lemma~\ref{lemma_fred_1}, it follows that
  $P_{\calG_1}|_{\calG_2}$ is Fredholm, $\calG_1\cap \calG_2^\perp
  =\{0\}$ and $\calG_2\cap \calG_1^\perp =\{0\}$. Furthermore, the
  Fredholm index of $P_{\calG_1}|_{\calG_2}$ is $0$. By
  Lemma~\ref{lemma_fred_2}, $T_{G_1^* G_2}$ is Fredholm, with Fredholm
  index $0$ too. From Proposition~\ref{prop_douglas}, it follows that
  $G_1^* G_2$ is invertible as an element of $AP+C_0$.  Thus it is
  also invertible as an element of $\calA$. Also, $W (G_1^*
  G_2)=(0,0)$. Now suppose that there is a $q_0\in \calA_+$ such that
  $ \| G_1-G_2 q_0\|_\infty <1$.  Then $\|I-G_1^* G_2 q_0\|_\infty <1$
  and so $G_1^* G_2 q_0=1-(1-G_1^* G_2 q_0)$ is invertible in $\calA$.
  Hence $G_1^* G_2 q_0 \in \inv \calA$. In particular, $q_0\in \inv
  \calA$. Consider the map $H:[0,1]\rightarrow \inv \calA$ given by $
  H(t)= 1-t(1-G_1^* G_2 q_0)$, $t \in [0,1]$. By the homotopic
  invariance,
  $$
  (0,0)=W(1)=W(H(0))=W(H(1))=W(G_1^* G_2 q_0).
  $$ 
  Since $W(G_1^* G_2)=(0,0)$, it follows that $W(q_0)=(0,0)$.
  Thus by (I4), we obtain that $q_0 \in \inv \calA_+$. Consequently,
  $$ 
  1>d_g(p_1,p_2)=\vec{\delta}(p_1,p_2)=\inf_{q\in \calA_+}\|G_1-G_2 q\|_\infty
  = \inf_{q\in \inv \calA_+}\|G_1-G_2 q\|_\infty.
  $$
 
  \noindent $\underline{2}^\circ$ Now suppose that $d_g(p_1,p_2)=1$,
  but that $\vec{\delta}(p_1,p_2)<1$. Since we have $
  \vec{\delta}(p_1,p_2)=\|(I-P_{\calG_2})P_{\calG_1}\|$, we obtain
  $\calG_1 \cap \calG_2^\perp=\{0\}$. For otherwise, if $0\neq v\in
  \calG_1 \cap \calG_2^\perp$, then we have
  $(I-P_{\calG_2})P_{\calG_1}v=v$, and so we would obtain that 
   $
  \|(I-P_{\calG_2})P_{\calG_1}\|\geq \|(I-P_{\calG_2})P_{\calG_1}v\|/\|v\|=1,
  $ 
  a contradiction. From Lemma~\ref{lemma_fred_1}, it now follows that
  either $\calG_2 \cap \calG_1^\perp\neq \{0\}$ or
  $P_{\calG_1}|_{\calG_2}$ is not Fredholm. 

  Suppose first that $P_{\calG_1}|_{\calG_2}$ is Fredholm. Then we
  must have $\calG_2 \cap \calG_1^\perp\neq \{0\}$. This gives that
  the Fredholm index of $P_{\calG_1}|_{\calG_2}$, namely
$$
\dim (\calG_2\cap \calG_1^\perp)-\dim (\calG_1\cap \calG_2^\perp)=\dim
(\calG_2\cap \calG_1^\perp)-0 =\dim (\calG_2\cap \calG_1^\perp),
 $$
 is nonzero. By Lemma~\ref{lemma_fred_2}, $T_{G_1^* G_2}$ is
 Fredholm, with Fredholm index nonzero too. It now follows from
 Proposition~\ref{prop_douglas}, that $W(G_1^* G_2)=(\ast,n)$ with the
 integer $n\neq 0$. By the definition of $d_\nu$, $d_\nu(p_1,p_2)=1$.

 Next assume that $P_{\calG_1}|_{\calG_2}$ is not Fredholm. Then
 Lemma~\ref{lemma_fred_2} gives that $T_{G_1^* G_2}$ is not Fredholm
 either. Now if $G_1^*G_2$ is not invertible in $\calA$, then we have
 $d_\nu(p_1,p_2)=1$ by definition. On the other hand, if $G_1^*G_2\in
 \inv \calA$ and $W(G_1^*G_2)=(0,0)$, it follows from \cite[Proposition~6.3, p.27]{DouA} 
 that $T_{G_1^*G_2}$ is invertible, a contradiction. Thus
 $W(G_1^*G_2)=(0,0)$, and so $d_\nu(p_1,p_2)=1$ in this case as well. 

 Now that we have obtained $d_\nu(p_1,p_2)=1$, it follows that there
 is no $q\in \inv \calA_+$ such that $\|G_1-G_2q\|_\infty<1$. In other
 words, for each $q\in \calA_+$, $\|G_1-G_2q\|_\infty\geq 1$. Also, 
 $
\|G_1-G_2q\|_\infty\leq \|G_1\|_\infty +\|G_2\|_\infty
\|q\|_\infty\leq 1+1\cdot \|q\|_\infty$, 
and by taking $q_1=\frac{1}{n}I\in \inv \calA_+$, we obtain
$$
\inf_{q\in \inv\calA_+}  \|G_1-G_2 q\|_\infty \leq \inf_n \left(1+\frac{1}{n}\right)=1.
$$
Consequently, $\displaystyle 
\inf_{q\in \inv\calA_+}  \|G_1-G_2 q\|_\infty=1=d_g(p_1,p_2)$.

\medskip 

\noindent $\underline{3}^\circ$ Now suppose that
$d_g(p_1,p_2)=1=\vec{\delta}(p_1,p_2)=\vec{\delta}(p_2,p_1)$. 
We have 
$$
\inf_{q\in \inv\calA_+} \|G_1-G_2 q\|_\infty \leq \inf_n \Big\|G_1-G_2
\frac{1}{n}I \Big\|_\infty\leq \inf_n \left(1+\frac{1}{n}\right)=1.
$$
Also, $ 1=\vec{\delta}(p_1,p_2)=\displaystyle \inf_{q\in \calA_+}
\|G_1-G_2 q\|_\infty\leq \displaystyle \inf_{q\in \inv \calA_+}
\|G_1-G_2 q\|_\infty$.  Thus  
$$
\displaystyle 
\inf_{q\in \inv \calA_+}  \|G_1-G_2 q\|_\infty=1=d_g(p_1,p_2).
$$

This completes the proof.
\end{proof}

\section{Equivalence of the $\nu$-metric and the gap-metric}
\label{section_equivalence}

\begin{proof}[Proof of Theorem~~\ref{main_theorem}] We will show the
  following for $p_1,p_2\in \mS(\calA_+)$:
\begin{equation}
\label{eq_equiv_inqs}
d_g(p_1,p_2)\mu_{\textrm{opt}}(p_1)\leq d_\nu(p_1,p_2) \leq d_g(p_1,p_2),
\end{equation}
where $\mu_{\textrm{opt}}(p_1):=\displaystyle \sup_{c} \mu_{p_1,c}$. 

This will prove the fact that the topologies induced by metrics $d_g$
and $d_\nu$ on the set $\mS(\calA_+)$ are identical.

The second inequality in \eqref{eq_equiv_inqs} is an immediate
consequence of the Propositions~\ref{prop_alt_exp_nu_metric} and
\ref{prop_alt_exp_nu_metric}. Indeed, we have
\begin{eqnarray*}
  d_\nu(p_1,p_2)&=& \inf_{\substack{q \in \inv \calA ,\\
      W( q)=(0,0)}} \|G_1 -G_2 q\|_\infty\\
  &\leq & \inf_{\substack{q \in \calA_+ \cap (\inv \calA) ,\\
      W( q)=(0,0)}} \|G_1 -G_2 q\|_\infty\\
  &=&  \inf_{q \in \inv \calA_+} \|G_1 -G_2 q\|_\infty \quad 
   \textrm{(using (I4))}\phantom{\inf_{\substack{q\\ W}} \|G\|}\\
  &=& d_g(p_1,p_2). \phantom{\inf_{\substack{q\\W}} \|G\|}
\end{eqnarray*}
Now we will show the first inequality in \eqref{eq_equiv_inqs}. This
inequality is trivially satisfied if $d_\nu(p_1,p_2)\geq
\mu_{\textrm{opt}}(p_1)$, since $d_g(p_1,p_2)\leq 1$. So we will only
consider the case when $ d_\nu (p_1,p_2)<\mu_{\textrm{opt}}(p_1)$.
Thus we can choose a $c$ that stabilizes both $p_1$ and $p_2$. (Since
the above inequality shows that there exists a $c_0$ stabilizing $p_1$
such that $d_\nu(p_1,p_2)<\mu_{p_1,c_0}$. But by
Theorem~\ref{theorem_stab_margin_nu_metric}, it follows that $c_0$
also stabilizes $p_2$.)  If we now define $ q_0:= ( \widetilde{K}_0
G_1)^{-1}\widetilde{K}_0 G_2$, then we have $G_2- G_1 q_0 = G_2- G_1 (
\widetilde{K}_0 G_1)^{-1}\widetilde{K}_0G_2 = (I-G_1 ( \widetilde{K}_0
G_1)^{-1}\widetilde{K}_0)G_2 $. Also
$$
I-\left[\begin{array}{cc} p_1 \\ 1 \end{array}\right] (1-c_0 p_1)^{-1} \left[\begin{array}{cc}-c_0 & 1 \end{array}\right]
 = 
\left[\begin{array}{cc} 1 \\ c_0 \end{array}\right] (1-p_1c_0)^{-1} 
 \left[\begin{array}{cc}1 & -p_1 \end{array}\right].
$$
that is, $ I-G_1 ( \widetilde{K}_0 G_1)^{-1}\widetilde{K}_0=K_0
(\widetilde{G}_1 K_0)^{-1}\widetilde{G}_1$.  Thus
$$
G_2- G_1 q_0 =K_0 (\widetilde{G}_1 K_0)^{-1}\widetilde{G}_1G_2.
$$
Then we use $\|K_0\|\leq 1$ (since $K_0^* K_0=1$) to obtain
\begin{eqnarray*}
\|G_2 -G_1 q_0\|_\infty &=& \|K_0 (\widetilde{G}_1 K_0)^{-1}\widetilde{G}_1   G_2\|_\infty\\
&\leq & \|K_0\|_\infty\|(\widetilde{G}_1 K_0)^{-1}\widetilde{G}_1   G_2\|_\infty\\
&\leq & 1 \cdot \|(\widetilde{G}_1 K_0)^{-1}\widetilde{G}_1   G_2\|_\infty\\
&\leq &  \|(\widetilde{G}_1 K_0)^{-1}\|_\infty\|\widetilde{G}_1   G_2\|_\infty.
\end{eqnarray*}
As for each $c$, $\mu_{p_1,c}\leq 1$, we have
$\mu_{\textrm{opt}}(p_1)\leq 1$. So $d_\nu
(p_1,p_2)<\mu_{\textrm{opt}}(p_1)\leq 1$, and we obtain 
$d_\nu(p_1,p_2)=\|\widetilde{G}_1 G_2\|_\infty$.

From \cite[Propositions~4.2,4.5]{BalSas}, $ \|(\widetilde{G}_1
K_0)^{-1}\|_\infty=1/\mu_{c_0,p_1}=1/\mu_{p_1,c_0}$.  So 
$$
\|G_2 -G_1 q_0\|_\infty\leq  \|(\widetilde{G}_1 K_0)^{-1}\|_\infty\|\widetilde{G}_1   G_2\|_\infty
\leq \frac{d_\nu(p_1,p_2)}{\mu_{p_1,c_0}}.
$$
Thus 
$$
d_g(p_1,p_2) = \displaystyle \inf_{q\in \inv \calA} \|G_1-G_2 q\|_\infty \leq
\|G_1-G_2 q_0\|\leq d_\nu(p_1,p_2)/\mu_{p_1,c_0}.
$$ 
This inequality holds for any $c_0$ that stabilizes $p_1$ for which
there holds $d_\nu(p_1,p_2)<\mu_{p_1,c_0}$. We can choose a sequence
$(c_{0,n})$ such $\mu_{p_1,c_{0,n}} \rightarrow
\mu_{\textrm{opt}}(p_1)$ as $n\rightarrow \infty$. Thus $ d_g(p_1,p_2)
\leq d_\nu(p_1,p_2)/\mu_{\textrm{opt}}(p_1)$.  This completes the
proof the first inequality in \eqref{eq_equiv_inqs}.
\end{proof}

The question of whether our main result, Theorem~\ref{main_theorem}
remains true for systems with multiple inputs and multiple outputs (as
opposed to just {\em scalar} inputs and outputs) is open.  A key
technical difficulty is the validity of the analogue of
Proposition~\ref{prop_douglas} for matricial data.

\medskip

  \noindent {\bf Acknowledgements:} The author thanks Professor Ronald
  Douglas for providing clarifications in \cite[Theorem~3]{Dou}, which
  are presented in Proposition~\ref{prop_douglas} of this article.
  Part (\ref{Kalle_remark}) of the Remark~\ref{remark} and the proof
  of Lemma~\ref{lemma_Kalle} are due to Kalle Mikkola, and the author
  thanks him for these.  A useful discussion with Professor
  Joseph Ball is also gratefully acknowledged.

\end{document}